    \newcommand{\commentTN}[1]{\textcolor{blue}{#1}}
    \newcommand{\commentSG}[1]{\textcolor{red}{#1}}
    \newcommand{\commentYL}[1]{\textcolor{green}{#1}}
    \newcommand{\commentOUT}[1]{}
\newcommand{\bbR}{\mathbb{R}}
\newcommand{\bbC}{\mathbb{C}}
\newcommand{\bbZ}{\mathbb{Z}}
\newcommand{\norm}[1]{\left\Vert#1\right\Vert}
\newcommand{\abs}[1]{\left\vert#1\right\vert}
\newcommand{\set}[1]{\left\{#1\right\}}
\newcommand{\Real}{\mathbb R}
\newcommand{\eps}{\varepsilon}
\newcommand{\To}{\longrightarrow}
\newcommand{\BX}{\mathbf{B}(\xx)}
\newcommand{\A}{\mathcal{A}}
\newcommand\varteta{\boldsymbol{\vartheta}}
\newcommand\teta{\boldsymbol{\theta}}
\newcommand\eeta{\boldsymbol{\eta}}
\newcommand\Teta{\boldsymbol{\Theta}}
\newcommand\xx{\boldsymbol{x}}
\newcommand{\bbS}{\mathcal{S}}
\newcommand{\bbSx}{\mathcal{S}_{\xx}}
\newcommand{\bbT}{\mathcal{T}}
\newcommand{\bbTx}{\mathcal{T}_{\xx}}
\newcommand{\bbI}{\mathbb{I}}
\newcommand{\bbX}{\mathbb{X}}
\newcommand{\bbH}{\mathbb{H}}
\newcommand{\bbE}{\mathbb{E}}
\newcommand{\calH}{\mathcal{H}}
\newcommand{\scpr}[2]{\left\langle#1|#2\right\rangle}
\newcommand{\calR}{\mathcal{R}}
\newcommand{\frakI}{\mathfrak{I}}
\newcommand{\frakK}{\mathfrak{K}}
\newcommand{\bbb}{\mathbf{b}}
\newcommand{\bbe}{\mathbf{e}}
\newcommand{\aalpha}{\boldsymbol{\alpha}}
\newcommand{\kerS}{{\rm ker\,}\bbS}
\newcommand{\OkerS}{({\rm ker\,}\bbS)^\bot}
\newcommand{\kS}{\mathfrak{S}}
\newcommand{\OkS}{\mathfrak{S}^\bot}
\newcommand{\Spec}{{\rm Spec\,}}
\newcommand{\pr}{\mbox{\sf P}}
\newcommand{\ex}{{\bf\sf E\,}}               
\newcommand{\var}{\mbox{\sf Var}}
\newcommand{\sd}{\mbox{\sf sd}}
\newcommand{\cv}{\mbox{\sf cv}}
\newcommand{\pdf}{\mbox{\sf pdf}}
\newcommand{\mapsfrom}{\mathrel{\reflectbox{\ensuremath{\mapsto}}}}
\def\bsk{\bigskip\noindent}
\def\ssk{\medskip\noindent}
\def\hsk{\smallskip\noindent}
\newcommand{\evt}{{\cal E}}             
\newcommand{\bS}{{\bf S}}               
\newcommand{\bs}{{\bf s}}               
\newcommand{\bA}{{\bf A}}               
\newcommand{\bB}{{\bf B}}
\newcommand{\bI}{{\bf I}}
\newcommand{\bZ}{{\mathbb Z}}
\newcommand{\bu}{{\bf u}}               
\newcommand{\bn}{{\bf n}}               
\newcommand{\bb}{{\bf b}}               
\newcommand{\bx}{{\bf \xx}}               
\newcommand{\bd}{{\bf d}}
\newcommand{\bk}{{\bf k}}
\newcommand{\bg}{{\bf g}}
\newcommand{\bh}{{\bf h}}
\newcommand{\bp}{{\bf p}}
\newcommand{\be}{{\bf e}}
\newcommand{\by}{{\bf y}}               
\newcommand{\bz}{{\bf z}}               
\newcommand{\bw}{{\bf w}}
\newcommand{\rk}{{\bf r}}
\newcommand{\bzm}{{\bf Z}_+^m}
\newcommand{\N}{{\mathcal N}}
\newcommand{\calL}{{\mathcal L}} 
\newcommand{\calO}{{\mathcal O}}
\newcommand{\calm}{{\mathcal M}}
\newcommand{\calb}{{\mathcal B}}
\newcommand{\calc}{{\mathcal C}}
\newcommand{\calp}{{\mathcal P}}
\newcommand{\calg}{{\mathcal G}}
\newcommand{\calk}{{\mathcal K}}
\newcommand{\cals}{{\mathcal S}}
\newcommand{\calI}{{\mathcal I}}
\newcommand{\calJ}{{\mathcal J}}
\newcommand{\calstoi}{{\mathcal S}_{>i}}
\newcommand{\calsjto}{{\mathcal S}_{j>}}
\newcommand{\calsito}{{\mathcal S}_{i>}}
\newcommand{\mtp}{{\mathfrak P}}
\newcommand{\cala}{{\mathcal A}}
\newcommand{\calf}{{\mathcal F}}
\newcommand{\caln}{{\mathcal N}}
\newcommand{\al}{\alpha}                
\newcommand{\unalpha}{\underline{\alpha}}                
\newcommand{\ovalpha}{\overline{\alpha}}                
\newcommand{\bet}{\beta}                
\newcommand{\g}{\lambda}                
\newcommand{\ga}{\gamma}                
\newcommand{\dt}{\beta}                
\newcommand{\Dt}{\beta}                
\newcommand{\la}{\lambda}               
\newcommand{\lam}{\lambda}               
\newcommand{\Lam}{\Lambda}               
\newcommand{\sig}{\sigma}               
\newcommand{\s}{\sigma}               
\newcommand{\om}{\omega}                
\newcommand{\Om}{\Omega}   
\newcommand{\ra}{\rightarrow}           
\newcommand{\Ra}{\Rightarrow}           
\newcommand{\imp}{\Rightarrow}           
\newcommand{\callRa}{\calleftrightarrow}      
\newcommand{\subs}{\subseteq}           
\newcommand{\stle}{\le_{\rm st}}        
\newcommand{\abk}{{\bf (a,b,k)}}
\newcommand{\hti}{{\tilde h}}
\newcommand{\tL}{{\tilde L}}
\newcommand{\wh}{\widehat}
\newcommand{\rinv}{R^{\mbox{\small -\tiny 1}}}   
\newcommand{\pre}{\preceq}
\newcommand{\Bbar}{\overline{B}}
\newcommand{\ub}{\underline{b}}
\newcommand{\starti}{\parindent0pt\it}  
\newcommand{\startb}{\parindent0pt\bf}  
\newcommand{\mc}{\multicolumn}  
\newcommand{\lrge}{\ge_{\rm lr}}        
\newcommand{\lrle}{\le_{\rm lr}}        
\newtheorem{thm}{Theorem}
\newtheorem{lemma}{Lemma}
\newtheorem{prop}{Proposition}
\newtheorem{coro}{Corollary}
\newtheorem{exm}{Example}
\newtheorem{exm*}{Example}
\newtheorem{rem}{Remark}
\newtheorem{asm}{Assumption}
\newtheorem{propty}{Property}
\newtheorem{proofdelayed}{Proof}[section]
\renewcommand{\theasm}{(\Alph{asm})}
\def\wp{{\rm w.p.}\quad}
\def\res{{\rm res}}
\def\dcum{D^{\rm cum}}
\def\dpp{{\partial\over{\partial {\bar f}_j}}}
\def\dppi{{\partial\over{\partial {\bar f}_i}}}
\def\dk{{\partial\over{\partial k_j}}}
\def\dpi{{{\partial\pi}\over{\partial {\bar f}_j}}}
\def\gradc{\nabla C}
\def\bfbar{{\bf {\bar f}}}
\def\sigtau{\sd (\tau_j)}
\def\Lin{L^{\rm in}}
\def\Lout{L^{\rm out}}
\def\lin{\ell^{\rm in}}
\def\lout{\ell^{\rm out}}
\def\eqd{{\buildrel {\rm d} \over =}}
\def\eqa{\, {\buildrel \sim \over = }\,}
\def\t{\tau}
\def\tg{{\tilde g}}
\def\ssk{\vskip .3cm}
\def\nd{\quad{\rm and}\quad}
\def\ind{\chi}
\newcommand{\bbP}{\mathbb P}
\newcommand{\bbQ}{\mathbb Q}
\newcommand{\bbQP}{{\mathbb Q}\times{\mathbb P}}
\newcommand{\lP}{L_\bbP}
\newcommand{\lQ}{L_\bbQ}
\newcommand{\lQP}{L_{\bbQ\times\bbP}}
\newcommand{\Sample}{\Sigma}
\newcommand{\Proj}{\Pi}
\newcommand{\Hamil}{\mathcal H}
\newcommand{\Hamiltonian}{\mathbf H}
\newcommand{\bbg}{\mathbf g}
\newcommand{\bbf}{\mathbf f}
\newcommand{\bbU}{\mathbf U}
\newcommand{\bbV}{\mathbf V}
\newcommand{\cP}{\mathcal{P}}
\newcommand{\cQ}{\mathcal{Q}}
\newcommand{\cT}{\mathcal{T}}
\newcommand{\cS}{\mathcal{S}}
\newcommand{\supp}{\rm supp\,}
\newcommand{\suppQ}{\mathfrak{Q}}
\newcommand{\suppP}{\mathfrak{P}}
\begin{document}

\begin{frontmatter}




\title{Polynomial convergence of iterations of certain random operators in Hilbert space}


\author{Soumyadip Ghosh}
\author{Yingdong Lu}
\author{Tomasz Nowicki}

\address{IBM T.J. Watson Research Center, Yorktown Heights, NY 10598, USA}

\begin{abstract}
We study the convergence of a random iterative sequence of a family of operators on infinite dimensional Hilbert spaces, inspired by the Stochastic Gradient Descent (SGD) algorithm in the case of the noiseless regression, as studied in~\cite{BerthialBachGaillard}. We identify conditions that are strictly broader than previously known for polynomial convergence rate in various norms, and characterize the roles the randomness plays in determining the best multiplicative constants. Additionally, we prove almost sure convergence of the sequence. 
\end{abstract}

\begin{keyword}
polynomial convergence \sep random operators \sep Stochastic Gradient Descent algorithm 
\MSC 46N10\sep 47B80 \sep 60B10
\end{keyword}

\end{frontmatter}


\section{Introduction}\label{sec:intro}
On a real Hilbert space $\bbH$ with inner product $\scpr{\cdot}{\cdot}$, define a family of rank 1 operators $\bbSx$ for $\xx\in\bbH$, and for given $\gamma\in [0,1)$ a family of operators $\bbTx$ acting on $\bbH$ by 
\begin{align}\label{def:Sx Tx}
    \bbSx:\bbH\ni\teta\mapsto\scpr{\teta}{\xx}\xx\in\bbH, \quad&\quad\bbTx:\bbH\ni\teta\mapsto\teta-\gamma\bbSx\teta\in\bbH\,.
\end{align}
The operator $\bbTx$ is motivated by the steps of the 
\emph{stochastic gradient descent} (SGD) algorithm for a noiseless linear regression problem in infinite dimension, see e.g.~\cite{BerthialBachGaillard}.
Assume  that there exists an optimal parameter $\vartheta^*\in \bbH$ such that the data $y\in\Real$ and $\xx\in\bbH$ always satisfy $y=\scpr{\varteta^*}{\xx}$.
In SGD applications, the task of determining $\varteta^*$
with respect to the independent sampling $(\xx(1),y(1)),\dots,(\xx(n),y(n)),\dots$ 
using the cost function ${\mathcal{L}}(\varteta|\xx)=(y-\scpr{\varteta}{\xx})^2=\scpr{\varteta-\varteta^*}{\xx}^2$ 
(derived from the assumption ) 
is carried by the following iterative scheme: given initial $\varteta_0\in \bbH$ (usually for practical reasons $\varteta_0=0$, but the convergence should not depend on it) we set
\begin{align}
\label{eqn:SGD_step}
\varteta(n+1)=\varteta(n)-\frac{\gamma}{2}\frac{\partial{\mathcal{L}}}{\partial\varteta}(\varteta(n))=
\varteta(n)-\gamma\scpr{\varteta(n)-\varteta^*}{\xx(n)}\cdot \xx(n)\,.
\end{align}
The parameter $\gamma>0$ is a small step size along the negative gradient of the cost function.   
In Equation~\eqref{def:Sx Tx}, $\teta=\varteta-\varteta^*$ represents the difference between the output of the algorithm and the optimum. 
{In terms of $\teta$ the cost function is equal to $\mathcal{L}(\teta|\xx)=\scpr{\teta}{\xx})^2$.}
{To prove that $\varteta(n)\to\varteta^*$ is now equivalent to prove that  ${\teta}(n)\to 0$. We note that the properties of this model are invariant under a normalization, \emph{i.e.} a rescaling of the variables $\xx$ and $y$ by some (same) constant and the parameter $\gamma$ by the square of its inverse.
}
\medskip

Conceptually, $\bbSx$ projects $\teta$ to the $\xx$ direction (with the factor $\|\xx\|^2$), and $\bbTx$ takes a proportion $\gamma$ of the image of the projection away from the original $\teta$. When $\bbTx$ is iterated for randomly selected $\xx$, and for $\gamma$ small enough, one would expect that the image, hence the error of the algorithm, eventually vanishes. 

\commentOUT{
It is observed in~\cite{BerthialBachGaillard} that the convergence rate in a square norm for the random iteration sequence have polynomial lower and upper bounds. However, characterization of the bounds depends upon the regularity of both the initial state and the distribution of the random sequence. Hence it is not immediate to see that the gap between the lower and the upper bound can be closed readily, and it is deemed as an open problem.  
With a different approach, 
}

 We prove 
 the polynomial convergence rate of the average of the sequence which is explicitly determined only by the regularity of the initial state, Theorem \ref{thm:avg upper bound} and \ref{thm:avg lower bound}. For convergence of the second moment, under a condition  on the regularity of the random distribution ({\bf Assumption~\ref{asm:main}}), 
 the convergence rate remains the same, Theorem \ref{thm:upper bound}. In another words, under~{\bf \ref{asm:main}}, the regularity of the random sequence only affects the coefficient not the order of the polynomial convergence. 
 Additionally  we demonstrate almost sure convergence of the sequence, Theorem \ref{thm:asc}. 

The rest of the paper will be organized as follow: in Section \ref{sec:results}, we  present our main results and their implications; in Section \ref{sec:properties of S}, we discuss the basic properties of the key operators and some key assumptions of the papers; the proofs the convergence rates are presented in Section \ref{sec:prop bounds}, while the proof of the almost sure convergence is presented in Section \ref{sec:asc}, with proofs of technical lemmata collected in Section \ref{sec:technical lemmata}. 

\commentOUT{Finally we compare the our results with the results of the paper \cite{BerthialBachGaillard} which inspired this article.} 

\commentOUT{
Following~\cite{BerthialBachGaillard} we consider a (noiseless) model 
$y=\scpr{\varteta^*}{\xx}$, where $y\in\Real$ and both  $\varteta, \xx\in \bbH$ are elements of the same Hilbert space with the scalar product $\scpr{\cdot}{\cdot}$. The parameter $\varteta^*$ exists but is unknown, our goal is to prove that a Stochastic Gradient Descent (SGD) algorithm, using i.i.d. samples of (features vector) $\xx$ from some fixed but unknown distribution provides convergent approximations.
We assume 
that the sequences of feature sample vectors $(\xx(0),\xx(1),\dots,\xx(n),\dots)$ produce the corresponding sequence of numbers $(y(0),y(1),\dots,y(n),\dots)$ which equal exactly to  $y(i)=\scpr{\varteta^*}{\xx(i)}$. 
}
\section{Main results} \label{sec:results}
For \commentTN{iid}\commentOUT{independent and identically distributed} random variables $\xx(1),\dots,\xx(n),\dots\in \bbH$ and given $\teta(0)$, the recursive definition~\eqref{eqn:SGD_step} becomes
\begin{align}
\label{eqn:update}
\teta(n+1)={\teta}(n)-\gamma\scpr{\teta\commentTN{(n)}}{\xx(n)}\cdot \xx(n)=\bbT_{\xx(n)}(\teta(n))\,.
\end{align}
Furthermore, define the average operators $\bbS$ and $\bbT$ of $\bbSx$ and $\bbTx$ by
\begin{align}
    \label{def: S T}
      \bbS=\ex[\bbSx]:\bbH\to \bbH,\quad&\quad \bbT=\ex[\bbTx]:\bbH\to \bbH\,,
\end{align}
where the symbol $\ex[\cdot]$ denotes the expected value w.r.t. the distribution of the vector $\xx$, but also the expected value w.r.t. the product distribution of the samples. We assume that $\bbS$ and $\bbT$ are bounded and well defined on $\bbH$, for which it is enough to assume that $\ex[\|\xx\|^2]<\infty$.  We note that $\bbS$ (as we shall see being symmetric), when defined on all $\bbH$,  is bounded by 
Hellinger–Toeplitz Theorem, (for basic materials and theorems of functional analysis used in this paper, see, e.g.~\cite{reed1981functional})
even without the condition on $\ex[\|\xx\|^2]$. 
The operators have finite norms, in particular $\|\bbSx\|^2=\|\xx\|^2<\infty$.
Because $\bbS$ is also non-negative, the powers $\bbS^\beta$ are well defined for (some) real values of $\beta$, certainly for all $\beta\ge 0$, $\bbS^0={\rm Id}$ and $\bbS^1=\bbS$. 
\begin{exm}
\label{example1}
The basic example illustrating the variable $\xx$ to keep in mind is related to the 
Gaussian Free Field~\cite{GaussianFreeFieldSheffield}.  Let $(\be_i)_{i=1}^\infty$ be any orthonormal basis in $\bbH$. Define the random variable $\xx= \sum_{i=0}^\infty x_i\be_i$, where $x_i$ are independent variables with mean 0 and variances $\ex [x_i^2]=\lambda_i$, note that for $i\not=j$, $\ex[x_ix_j]=\ex[x_i]\ex[x_j]=0$. In this setting, with $\teta=\sum_i\theta_i\be_i$ we have $\scpr{\teta}{\xx}=\scpr{\sum_{i=0}^\infty\theta_i \be_i}{\sum_{j=0}^\infty x_j \be_j}=\sum_i(\theta_i x_i)$ and
\begin{align*}
\bbS \teta
&=
\ex[\bbSx \teta]=\ex[\scpr{\teta}{\xx}\xx]
=\ex\left[\sum_i(\theta_i x_i)\cdot\sum_k (x_k\be_k)\right]
=\sum_k\sum_i\left(\theta_i\ex[x_i x_k]\be_k\right)
=
\sum_{i}\theta_i\ex[x_i^2]\be_i=\sum_{i=0}^\infty \lambda_i\theta_i \be_i\,.
\end{align*}
We conclude that   $\bbS\,\teta\in\bbH$ for every $\teta\in\bbH$\quad iff \quad $\lambda_j$ are uniformly bounded.
\end{exm}

We shall investigate the rate of convergence by using the "norms"
\begin{align*}
\varphi_\beta&:\bbH\to \Real,  \quad\bbH\ni \teta\mapsto\varphi_\beta(\teta)=\scpr{\teta}{\bbS^{-\beta}\teta}=:\|\teta\|_\beta^2\,,\\
\text{given}\quad \teta(0),\quad \phi_n&:\Real\to \Real,  \quad\Real\ni \beta\mapsto\phi_n(\beta)=\ex[\varphi_\beta (\teta(n))]=\ex[\|\teta(n)\|_\beta^2]\,.
\end{align*}

The numbers $\phi$ depend on the starting $\teta(0)$ but, due to the expected value, not on the choice of the samples $(\xx)$.
We introduce the limits of applicable $\beta$, for $\teta, \xx\in\bbH$, as,
\begin{equation}\label{eqn:def alpha teta xx}
    \alpha(\teta)=\sup\{\beta:\varphi_\beta(\teta)<\infty\}\qquad \text{and}\qquad
\aalpha=\sup\{\beta:\ex[\varphi_\beta(\xx)]<\infty\}\,. 
\end{equation}
We have $\alpha(\teta)\ge 0$ and, as we shall see (Lemma \ref{lem:second moment of xx}), $\aalpha\le 1$.


\commentOUT{When in \eqref{eqn:SGD_step} we shift the variable $\varteta$ to $\teta\mapsfrom\varteta-\varteta^*$, then the cost function at the $n$-th SGD step (depending on the sample $\xx(n)$ at this step)  and the updates take the form presented above in Equation \eqref{eqn:update}.
}

\medskip
\commentOUT{
In the article~\cite{BerthialBachGaillard} under 
\begin{asm}\label{asm:bounded xx}
$\varphi_\beta(\xx)$ is uniformly bounded for all $\xx\in\bbH$ (including $\varteta^*$)\,;
\end{asm}
the authors state that
\begin{enumerate}
\item 
if there is some $\underline{\al}$ such that both regularity properties of the target and data are satisfies (i.e. both $\varphi_{\underline \al}(\varteta^*)$ and $\ex[\varphi_{\underline \al}(\xx)]$ are finite), then there are constants $C_0,C_{-1}$ such that for all $n$:\quad 
$\phi_n(0)\le C_0n^{-\underline{\al}}$\quad and\quad $\phi_n(-1)\le C_{-1}n^{-(\underline{\al}+1)}$;
\item if there exists an $\overline{\al}$ such that  $\varphi_{\overline{\al}}(\varteta^*)$ or 
$\ex[\varphi_{\overline{\al}}(\xx)]$
are infinite then no such constants can be found. 
\item 
Because $\underline{\al}$ and $\overline{\al}$ are determined by the regularity of both target and data, it is considered an open problem for closing the gap between them. 
\end{enumerate}
It is also shown that {\bf Assumption~\ref{asm:bounded xx}} can be replaced by a more general one (Remark 3. in~\cite{BerthialBachGaillard}):
\begin{asm}\label{asm:Remark3}
$\exists_{\alpha>0}\,\forall_{\beta<\alpha}\,\exists_{R_\beta}\,\forall_{\teta\in\bbH}$ \quad
$
\ex[\scpr{\teta}{\xx}^2\varphi_{\beta}(\xx)]\le R_{\beta}\ex[\scpr{\teta}{\xx}^2]=R_{\beta}\varphi_{-1}(\teta)
$.
\end{asm}
Our approach and results are different.  
}

First two Theorems bound $\phi_n(\beta)$ for averages $\ex[\teta(n)]=\bbT^n\teta(0)$, depending only on 
 $\varphi_\beta(\teta(0))$, where 
 $\teta(0)=\varteta(0)-\varteta^*\,(=-\varteta^*)$.
 \begin{thm}[Upper bound for the average $\teta(n)$]\label{thm:avg upper bound}
 Given $\teta(0)=\teta$ and $\bbT^n\teta=\ex[\teta(n)]$ we have,
 \begin{align*}
\text{for every}\quad n,\quad\|\bbT^n\teta\|^2\le \exp(-\beta)\left(\frac{\beta}{n}\right)^\beta\cdot\|\teta\|_\beta^2\,.
\end{align*}
\end{thm}
%
\begin{thm}[Lower bound for the average $\teta(n)$]\label{thm:avg lower bound}
Given $\teta(0)=\teta$ and $\bbT^n\teta=\ex[\teta(n)]$,  
for any sequence $(t_n)>0$ such that $\sum_n 1/(nt_n)<\infty$, we have,
\begin{align*}
\text{if}\quad \|\bbT^n\teta\|^2\le \frac{1}{n^\beta t_n}\quad\text{for every}\quad n, 
\quad\text{then}\quad \|\teta\|_\beta^2<\infty\,.
\end{align*}
\end{thm}
\emph{Examples of slow increasing sequences $t_n$  with $\sum_n 1/(n t_n)<\infty$ are $n^\epsilon$, $(\ln n)^{1+\epsilon}$ or $\ln n\cdot(\ln\ln n)^{1+\epsilon}$ etc. with any $\epsilon>0$.} 
%

\medskip

{
 In order to produce the upper bound of the square of $\teta(n)$ 
 \commentOUT{in~\cite{BerthialBachGaillard}}
 we need an additional assumption. First let's define a family of inequalities:
 \begin{align}\label{eqn:c kappa beta}
 \ex\left[\scpr{\teta}{\xx}^2\scpr{\xx}{\bbS^{-\beta}\xx}\right]&\le C_{\beta\kappa}\scpr{\teta}{\bbS^{1-\kappa}\teta}
     \tag{$C_{\beta\kappa}$}\quad(\,=\,C_{\beta\kappa}\,\varphi_{\kappa-1}(\teta)\,)\,.
 \end{align}
  \commentOUT{weaker than {\bf Assumption~\ref{asm:Remark3}}  }
\begin{asm}
\label{asm:main}
There is an $\alpha>0$ such that the distribution of $\xx$ satisfies~\eqref{eqn:c kappa beta} with $\kappa=\beta<\alpha$. 
For every  $\beta<\alpha$ there is a $C_\beta\,(:=C_{\beta\beta})$  such that for every $\teta\in\bbH$:
\[
\ex\left[\scpr{\teta}{\xx}^2\varphi_\beta(\xx)\right]\le C_{\beta}\scpr{\teta}{\bbS^{1-\beta}\teta}=
C_{\beta}\varphi_{\beta-1}(\teta)\,.
\]
\end{asm}
}

\begin{thm}[Upper bound for the average  $\|\teta(n)\|^2$]\label{thm:upper bound}
Assuming~{\bf \ref{asm:main}}, for any $0\le\beta<\alpha(\teta)$ if we take $\gamma<2/C_\beta$ then \quad
$
\ex[\|\teta(n)\|^2]\le \calO(1)n^{-\beta}
$.
\end{thm}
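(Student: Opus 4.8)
The plan is to reduce the stochastic iteration to a deterministic scalar recursion for $a_n := \phi_n(0) = \ex[\|\teta(n)\|^2]$ and then read off the polynomial rate. First I would derive a one-step inequality for each admissible exponent $s$. Conditioning on $\teta(n)$ and writing $\xx = \xx(n)$, the update $\teta(n+1) = \teta(n) - \gamma\scpr{\teta(n)}{\xx}\xx$ gives, after expanding $\varphi_s(\teta(n+1)) = \scpr{\teta(n+1)}{\bbS^{-s}\teta(n+1)}$,
\begin{align*}
\varphi_s(\teta(n+1)) = \varphi_s(\teta(n)) - 2\gamma\scpr{\teta(n)}{\xx}\scpr{\xx}{\bbS^{-s}\teta(n)} + \gamma^2\scpr{\teta(n)}{\xx}^2\varphi_s(\xx)\,.
\end{align*}
Taking $\ex$ over $\xx$, the cross term evaluates by symmetry of $\bbS^{-s}$ and the defining identity $\ex[\scpr{\psi}{\xx}\xx] = \bbS\psi$ to $\varphi_{s-1}(\teta(n))$, while the quadratic term is controlled by \textbf{\ref{asm:main}} by $C_s\varphi_{s-1}(\teta(n))$. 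Taking full expectation then yields the master recursion
\begin{align*}
\phi_{n+1}(s) \le \phi_n(s) - \gamma(2 - \gamma C_s)\,\phi_n(s-1)\,,
\end{align*}
in which the coefficient $c_s := \gamma(2-\gamma C_s)$ is positive precisely because $\gamma < 2/C_\beta$ (and, at $s=0$, because $c_0>0$ as well).

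Applying this at $s = \beta$ shows $\phi_n(\beta)$ is non-increasing, so $\phi_n(\beta) \le \phi_0(\beta) = \varphi_\beta(\teta) =: B < \infty$, the finiteness being exactly the hypothesis $\beta < \alpha(\teta)$. Applying it at $s = 0$ gives $a_{n+1} \le a_n - c_0\,\phi_n(-1)$, so it remains to bound $\phi_n(-1)$ from below by a power of $a_n$. For this I would invoke log-convexity of $s \mapsto \scpr{\teta}{\bbS^{-s}\teta}$ (the moments of the spectral measure of $\bbS$), which gives the pointwise bound $\varphi_0(\teta(n)) \le \varphi_{-1}(\teta(n))^{\beta/(1+\beta)}\varphi_\beta(\teta(n))^{1/(1+\beta)}$; taking expectations and applying Hölder with the conjugate exponents $(1+\beta)/\beta$ and $1+\beta$ yields $a_n \le \phi_n(-1)^{\beta/(1+\beta)}\phi_n(\beta)^{1/(1+\beta)}$. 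Rearranging and inserting $\phi_n(\beta) \le B$ produces
\begin{align*}
\phi_n(-1) \ge B^{-1/\beta}\,a_n^{(1+\beta)/\beta}\,.
\end{align*}

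Substituting back gives the closed scalar recursion $a_{n+1} \le a_n - c'\,a_n^{1 + 1/\beta}$ with $c' = c_0 B^{-1/\beta} > 0$. The conclusion $a_n = \calO(n^{-\beta})$ then follows from an elementary lemma: setting $b_n = a_n^{-1/\beta}$, the monotonicity $a_{n+1}\le a_n$ together with convexity of $t \mapsto t^{-1/\beta}$ forces $b_{n+1} - b_n \ge c'/\beta$, whence $b_n \gtrsim n$ and $a_n \lesssim n^{-\beta}$. I expect the genuine difficulty to lie not in the algebra but in two technical points: justifying the one-step expansion and the interchange of expectation when $\varphi_s(\xx)$ may be infinite for individual samples (legitimate because \textbf{\ref{asm:main}} renders the relevant expectations finite), and ensuring the interpolation is valid along the trajectory, which needs $\varphi_\beta(\teta(n)) < \infty$ almost surely — guaranteed since $\phi_n(\beta) = \ex[\varphi_\beta(\teta(n))] \le B < \infty$. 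One should also note that \textbf{\ref{asm:main}} is used only at exponents in $[0,\alpha)$, so the argument tacitly requires $\beta < \alpha$ alongside $\beta < \alpha(\teta)$.
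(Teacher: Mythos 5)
Your proposal is correct and follows essentially the same route as the paper: the one-step expansion is Lemma~\ref{lem:main recursion} combined with {\bf\ref{asm:main}}, the boundedness of $\phi_n(\beta)$ is Corollary~\ref{lem:teta(n) decreases}, the log-convexity/H\"older interpolation giving $\phi_n(-1)\ge M_\beta^{-1/\beta}a_n^{1+1/\beta}$ is Lemma~\ref{lem: Holder} as used in Proposition~\ref{prop:upper bound rnd} (with $\kappa=0$), and your closing substitution $b_n=a_n^{-1/\beta}$ is exactly Lemma~\ref{lem:neutral recursion}. The only slight divergence is that you are a bit more explicit than the paper about passing the pointwise interpolation through the expectation and about which constant ($C_0$ versus $C_\beta$) governs the step-size restriction at level $s=0$ --- a point the paper glosses over in the same way.
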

\noindent
The average of $\|\teta(n)\|^2$ is lower bounded by 
$\|\ex[\teta(n)]\|^2$.  Theorem~\ref{thm:avg lower bound}\, applied to $\beta>\alpha(\teta)$ allows us to take $t_n=C n^{\beta-\kappa}$, $\alpha(\teta)<\kappa<\beta$. 
Thus $\|\bbT^n\teta\|^2$ cannot be bound by $C n^{-\beta}=C n^{-\kappa} t_n$,  as it  would imply $\|\teta\|_\kappa^2<\infty$ a contradiction to $\kappa>\alpha(\teta)$. 

\medskip
Convergence in norms, including the average, implies that $\teta_n$ converge to zero in probability as a sequence of random variables in $\bbH$, see, e.g.~\cite{ledoux1991probability}. It is natural to examine the almost sure convergence of the $\teta_n$. 

\begin{thm}[Almost sure convergence]
\label{thm:asc}
If $\ex[\|\xx\|^4] < \infty$ and $\delta:=\inf_{||z||=1}\ex[\scpr{z}{\xx}^2]>0$ 
then the sequence $\teta_n$ converges to zero almost surely
 for $\gamma<\delta/\ex[\|\xx\|^4] $.
If for some $M>0$, $\|\xx\|^2\le M$ almost surely  then such convergence occurs for $\gamma\le 2/M$.
\end{thm}
\noindent
We shall see in Proposition~\ref{prop:beta kappa} that the condition in Theorem~\ref{thm:asc},  $\ex[\|x\|^4]<\infty$,  is satisfied under {\bf Assumption~\ref{asm:main}}
\commentOUT{, so also under~{\bf \ref{asm:Remark3}}, while the boundedness of $\|\xx\|^2\le M$ corresponds to ~{\bf\ref{asm:bounded xx}} with $\beta=0$}.

\medskip
For proofs of Theorems \ref{thm:avg upper bound}, \ref{thm:avg lower bound}, and \ref{thm:upper bound}, see Section~\ref{sec:prop bounds}, Propositions \ref{prop:avg upper bounds}, \ref{prop:avg lower bounds}, and \ref{prop:upper bound rnd} with $\kappa=0$. For proof of Theorem~\ref{thm:asc} see Section~\ref{sec:asc}.

{
\begin{exm}\label{example2}
Let the distribution of $x$ in Example \ref{example1} 
be even with the property that 
$y_i=x_i^2$ has a ($\Gamma$-)density $t^{\lambda_i-1}\exp({-t}) /\Gamma(\lambda_i)$ for $t\in [0, \infty)$. Therefore, 
$\ex[x_i^2]=\ex[y_i]=\int_0^\infty t^{\lambda_i}\exp(-t)\,dt/\Gamma(\lambda_i)=\Gamma(\lambda_i+1)/\Gamma(\lambda_i)=\lambda_i$.
Similarly $\ex[x_i^4]=\ex[y_i^2]=\lambda_i(1+\lambda_i)$.
Given $\alpha\in(0,1)$ let $0<\lambda_i\searrow 0$ be such that 
$\sum_{i=0}^\infty \lambda_i^{1-\beta}=:K_\beta<\infty$ for any $\beta\in(0,\alpha]$. 
\\
Some elementary calculation (see Section~\ref{sec6:example2}) using which using independence, 0 mean and calculated moments provide that  LHS of~\ref{asm:main} is equal to
\begin{align*}
 \ex\left[\scpr{\teta}{\xx}^2\varphi_\beta(\xx)\right]=\ex\left[\left(\sum_i\theta_i x_i\right)\cdot\left(\sum_k \theta_i x_k\right)\cdot\left(\sum_j \lambda_j^{-\beta}x_j^2\right)\right]
\sum_{i}\theta_i^2\lambda_i\cdot K_\beta
+\sum_{i}\theta_i^2\lambda_i^{1-\beta}\,.
\end{align*}
Now  we can use Assumption \ref{asm:bounds on lambda} below (which is not essential, as without it we would just have a less pleasant constant) and get 
an upper bound for the  LHS by $(K_\beta+1)\scpr{\teta}{\bbS^{1-\beta}\teta}$, which is the RHS of~\ref{asm:main} with $C_\beta=K_\beta+1$. That proves that our example satisfies Assumption~\ref{asm:main}.
\end{exm}
\begin{rem}
This example also shows that the bound in \ref{asm:main} is accurate. The  collection of $\teta$'s satisfying the inequality~\eqref{eqn:c kappa beta} with $\kappa=\beta$, that is Assumption~\ref{asm:main}, is larger than the collection satisfying a stronger  assumption, 
the inequality~\eqref{eqn:c kappa beta} with $\kappa<\beta$. In particular this applies to $\kappa=0$, which is the condition used  in~\cite{BerthialBachGaillard}. Example \ref{example2} provides a family of distributions that satisfy Assumption~\ref{asm:main} but not \eqref{eqn:c kappa beta} with $\kappa=0$.
Indeed, as $\teta$ is arbitrary we can take in our example $\teta=\bbe_i$. Then the LHS will be equal (as in the last expression above) to  $K_\beta\lambda_i +\lambda_i^{1-\beta}$ which,  for any given $C_{\beta\kappa}$ and for sufficiently large $i$ (and therefore small $\lambda_i$) is larger than the RHS  equal to $C_{\beta\kappa}\lambda_i^{1-\kappa}$ due to $1-\kappa>1-\beta$ and $\lambda_i\searrow 0$.
\end{rem}
}
\section{Properties of the operators}\label{sec:properties of S}
In this section we present basic properties of the operators, which can be easily deduced directly from the definitions.
\commentOUT{In Introduction,  we defined a family of linear operators (of rank 1)}
{Recall the definitions of $\bbSx$ and $\bbTx$ in~\eqref{def:Sx Tx} and  their averages $\bbS$ and $\bbT$ in~\eqref{def: S T}. We assume that both $\bbSx$ and $\bbS$ are bounded and well defined for all $\teta\in\bbH$.} 

 \begin{propty}[$\bbSx$ and the average $\bbS$ are symmetric and non-negative]\label{lem:symm and nonneg}\ 
 \begin{itemize}
     \item $\scpr{\eeta}{\bbSx\teta}=\scpr{\eeta}{\xx}\scpr{\teta}{\xx}=\scpr{\teta}{\bbSx\eeta}$;
     \item
    $\scpr{\eeta}{\bbS\teta}=\scpr{\eeta}{\ex[\bbSx\teta]}=\ex[\scpr{\eeta}{\bbSx\teta}]=
    \ex[\scpr{\teta}{\bbSx\eeta}]=\scpr{\teta}{\bbS\eeta}$;
    \item non-negativity:
    $\scpr{\teta}{\bbSx\teta}=\scpr{\teta}{\xx}^2$.
 \end{itemize}
\end{propty}

 \begin{propty}[$\bbS$ admits an orthonormal (ON) basis of eigen-vectors]\label{lem:ON basis}\ 
 \begin{itemize}
    \item As the operator $\bbS$  is symmetric, non-negative and defined on all $\bbH$, it has an ON basis $(\bbe_i)$ of eigen-vectors, with corresponding bounded non-negative eigenvalues $(\lambda_i)$.
    \item
    If in this basis $\teta=\sum\theta_i\bbe_i$ then $\bbS\teta=\sum\lambda_i\theta_i\bbe_i$. 
\end{itemize}
\end{propty}
\begin{propty}[The moments of $\xx$]\label{lem:second moment of xx}\ 
\begin{itemize}
\item 
Each feature coordinate $x_i$ of $\xx$ in the ON basis $(\bbe)$ has finite second moment:
$\ex[x_i^2]=\lambda_i$.\\
Using $\scpr{\bbe_i}{\bbe_i}=1$ and $\scpr{\bbe_i}{\xx}=x_i$ for the features vector $\xx=\sum_i\bbe_i$ we obtain,
\[
\lambda_i=\scpr{\bbe_i}{\lambda_i \bbe_i}=\scpr{\bbe_i}{\bbS \bbe_i}=\scpr{\bbe_i}{\ex[\bbSx\bbe_i]}=
\ex[\scpr{\bbe_i}{x_i \xx}]=\ex[x_i^2]\,.
\]
\item
The coordinates of $\xx$ in the ON basis $(\bbe)$ are de-correlated: $\ex[x_ix_j]=0$ (\emph{are uncorrelated if $\ex[\xx]=0$}). \\
Using $\bbe_i+\bbe_j$ and the orthonormality we get 
\begin{align*}
\lambda_i+\lambda_j&=\scpr{\bbe_i+\bbe_j}{\lambda_i \bbe_i+\lambda_j\bbe_j)}=
\scpr{\bbe_i+\bbe_j}{\bbS (\bbe_i+\bbe_j)}
\\
&=\scpr{\bbe_i+\bbe_j}{\ex[\bbSx(\bbe_i+\bbe_j)]}
=\ex[\scpr{\bbe_i+\bbe_j}{\bbSx(\bbe_i+\bbe_j)}]
\\
&=\ex[\scpr{\bbe_i+\bbe_j}{\xx}^2]=\ex[(x_i+x_j)^2]
=\lambda_i+2\ex[x_ix_j]+\lambda_j
\end{align*}

\item Special form of $\bbS$ in the ON basis.\\
$
\ex[\scpr{\teta}{\bbSx\teta}]=\ex[\scpr{\teta}{\xx}^2]=\ex[(\sum\theta_i x_i)^2]=\ex[\sum(\theta_ix_i)^2]=\sum(\theta_i^2\ex[x_i^2])=\sum\theta_i^2\lambda_i=\scpr{\teta}{\bbS \teta}
$.
\end{itemize}
\end{propty}
\emph{
    We note that when $\lambda_i=0$ we have $\ex[x_i^2]=0$, so that $x_i=0$ \emph{a.s.} and we may restrict ourselves to the closure of the subspace $\{\boldsymbol{h}=\sum_{\lambda_i>0} h_i\bbe_i\}\subset \bbH$, where $\bbS \teta=0$ only when $\teta=0$. 
    }
    
We have $\ex[\varphi_\beta(\xx)]=\ex[\scpr{\xx}{\bbS^{-\beta}\xx}]=\ex[\scpr{\sum_i x_i\bbe_i}{\sum_j x_j\lambda_j^{-\beta}\bbe_j}]=
 \ex[\sum_i \lambda^{-\beta}x_i^2]=\sum_i\lambda_i^{{-\beta}}\ex[x_i^2]=\sum_i\lambda_i^{1-\beta}$. In particular 
 the sum is infinite for $\beta\ge 1$ as $\lambda_i$ are bounded so  $\aalpha\le 1$. 
 Also $\ex[\|\xx\|^2]=\ex[\sum x_i^2]=\sum \lambda_i$. 
From now on we shall use 
\begin{asm}\label{asm:bounds on lambda}
For any eigenvalue  $\lambda$ in the spectrum of $\bbS$ we have 
$    
    0<\lambda<\frac{1}{2}<1$.
\end{asm}
This is not a loss of generality. The operator is continuous, hence bounded and its spectrum is compact. It is positive and symmetric. Let $\lambda_0=\sup\lambda$. As we are interested in the iterations of $\bbTx={\rm I}-\gamma\bbSx$ for small $\gamma$ we may assume that $\gamma<1/2\lambda_0$ by changing either $\xx$ (and $y$) to $\xx/{2\lambda_0}$ (and to $y/{2\lambda_0}$) 
or changing $\bbSx$ to $\teta\mapsto\scpr{\teta}{\xx}\cdot \xx/2\lambda_0$, effectively using $\gamma'=\gamma\cdot 2\lambda_0$.

Using the ON basis the operators $\bbS^\kappa:\bbH\to \bbH$ can be now defined by
$\bbS^{\kappa}\teta=\sum\lambda_i^\kappa\theta_i\bbe_i$.

With the definitions~\eqref{eqn:def alpha teta xx} of $\alpha(\teta)$ and $\alpha$ from Section~\ref{sec:intro} we have,
\begin{propty}[Bounds on the powers $\bbS^{-\beta}$]\label{lem:def of alpha}
(1) Given $\teta$, $\|\teta\|^2_\beta$ is an increasing function of $\beta$; (2) $\alpha(\teta)\ge 0$; and (3) $\aalpha\le 1$, independently of the distribution of data $\xx$. If $\ex[\|\xx\|^2]<\infty$ then $\aalpha\ge 0$ and $\sum\lambda_i<\infty$. 
\end{propty}
\commentOUT{As $\|\teta\|_0^2=\|\teta\|^2<\infty$ for $\teta\in \bbH$ we have $\alpha(\teta)\ge 0$.}

\commentOUT{
\begin{rem}
Lemma~\ref{lem:def of alpha} asserts that $\scpr{\teta}{\bbS^{1-\beta}\teta}=\|\teta\|_{\beta-1}^2$  is an increasing function of $\beta$. 
Given $\kappa$ the LHS of the inequality~\eqref{eqn:c kappa beta} remains the same, hence the minimal constant $C_{\beta\kappa}$ in its RHS is decreasing with increased $\beta$. Similarly given $\beta$ the minimal  $C_{\beta\kappa}$ must increase with $\kappa$. 
In particular given $\kappa$, if the distribution of $\xx$ satisfies  \eqref{eqn:c kappa beta} for some $\beta$ it also satisfies it for any larger $\beta'$ with a smaller constant.
\end{rem}
}
\commentOUT{{\bf \ref{asm:Remark3}} implies~{\bf\ref{asm:main}}. }

\begin{prop}[Bounds on moments]
\label{prop:beta kappa}
Suppose that Assumption~\ref{asm:main} is satisfied with $\beta=0$, which means that there exists a $C_0$ such that for all $\teta$ we have $\ex[\scpr{\teta}{\xx}^2\|\xx\|^2]\le C_0\scpr{\teta}{\bbS\,\teta}$, then
\[
\ex[\|\xx\|^2]^2\le\ex[\|\xx\|^4]\le C_{0}\ex[\|\xx\|^2]\le C_{0}^2\,.
\]
\end{prop}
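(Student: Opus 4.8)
The plan is to establish the three links of the chain separately, observing that the rightmost inequality is essentially free once the first two are in hand.

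First I would dispatch the leftmost inequality $\ex[\|\xx\|^2]^2 \le \ex[\|\xx\|^4]$. This is merely the non-negativity of the variance of the scalar random variable $Z = \|\xx\|^2 \ge 0$: since $\var(Z) = \ex[Z^2] - (\ex[Z])^2 \ge 0$, the claim follows (equivalently, Cauchy--Schwarz applied to $Z$ and the constant $1$). This step needs no appeal to Assumption~\ref{asm:main}.

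The heart of the argument is the middle inequality $\ex[\|\xx\|^4] \le C_0 \ex[\|\xx\|^2]$, and here I would exploit the freedom in the quantifier ``for all $\teta$.'' Working in the ON basis $(\bbe_i)$ of $\bbS$ supplied by Property~\ref{lem:ON basis}, I would specialize the hypothesis to $\teta = \bbe_i$. Since $\scpr{\bbe_i}{\xx} = x_i$ and $\scpr{\bbe_i}{\bbS\bbe_i} = \lambda_i$, this yields $\ex[x_i^2 \|\xx\|^2] \le C_0 \lambda_i$ for every $i$. Summing over $i$ and interchanging sum with expectation, justified by Tonelli since each summand $x_i^2\|\xx\|^2$ is non-negative, gives
\[
\ex[\|\xx\|^4] = \ex\Big[\big(\textstyle\sum_i x_i^2\big)\|\xx\|^2\Big] = \sum_i \ex[x_i^2\|\xx\|^2] \le C_0 \sum_i \lambda_i = C_0\, \ex[\|\xx\|^2],
\]
where I use $\sum_i x_i^2 = \|\xx\|^2$ and the identity $\ex[\|\xx\|^2] = \sum_i \lambda_i$ recorded in Property~\ref{lem:second moment of xx} and Property~\ref{lem:def of alpha}. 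The standing hypothesis $\ex[\|\xx\|^2] < \infty$ (equivalently $\sum_i \lambda_i < \infty$) keeps the right-hand side finite, so this simultaneously certifies $\ex[\|\xx\|^4] < \infty$.

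Finally, the rightmost inequality $C_0 \ex[\|\xx\|^2] \le C_0^2$, i.e. $\ex[\|\xx\|^2] \le C_0$, comes for free by composing the first two: $\ex[\|\xx\|^2]^2 \le \ex[\|\xx\|^4] \le C_0 \ex[\|\xx\|^2]$, so dividing by $\ex[\|\xx\|^2]$ yields the bound whenever $\ex[\|\xx\|^2] > 0$, while the degenerate case $\ex[\|\xx\|^2] = 0$ forces $\xx = 0$ almost surely and makes all four quantities vanish. I do not expect a genuine obstacle here: the only points requiring care are the legitimacy of the termwise summation (handled by non-negativity via Tonelli) and the trivial degenerate case. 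The one mildly clever move is recognizing that testing the hypothesis against the basis vectors $\bbe_i$ and summing reconstructs exactly $\ex[\|\xx\|^4]$ on the left and $\ex[\|\xx\|^2]$ on the right.
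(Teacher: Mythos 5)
Your proof is correct and rests on exactly the same key move as the paper's: test the hypothesis against the eigenbasis vectors $\teta=\bbe_i$ to get $\ex[x_i^2\|\xx\|^2]\le C_0\lambda_i$, then sum. The one material difference is how the summation is organized, and it is worth noting what the paper's version buys. You sum over all $i$ at once (via Tonelli) to get $\ex[\|\xx\|^4]\le C_0\sum_i\lambda_i=C_0\ex[\|\xx\|^2]$, and then obtain $\ex[\|\xx\|^2]\le C_0$ by dividing through in the chain $\ex[\|\xx\|^2]^2\le\ex[\|\xx\|^4]\le C_0\ex[\|\xx\|^2]$; this last step genuinely needs $\ex[\|\xx\|^2]<\infty$ as an input, since if the second moment were infinite the chain would read $\infty\le\infty\le\infty$ and no contradiction or bound would follow. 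The paper instead sums only over $i\le N$ and inserts the elementary bound $\bigl(\sum_{i\le N}x_i^2\bigr)^2\le\bigl(\sum_{i\le N}x_i^2\bigr)\|\xx\|^2$, obtaining the uniform-in-$N$ estimate $\ex\bigl[\sum_{i\le N}x_i^2\bigr]\le C_0$ for every truncation; letting $N\to\infty$ then \emph{proves} $\ex[\|\xx\|^2]\le C_0<\infty$ rather than assuming it. Since the text immediately after the Proposition states that it ``implies that both the second and the fourth moments of $\xx$ are finite,'' the finiteness of $\ex[\|\xx\|^2]$ is meant to be a conclusion here, so you should either adopt the truncation device or state explicitly that you are invoking the paper's standing integrability assumption at that point. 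Apart from this, your handling of the outer inequalities (Jensen on the left, the degenerate case $\ex[\|\xx\|^2]=0$ on the right) is fine and matches what the paper leaves implicit.
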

For proof see Section~\ref{sec6:prop:beta kappa}. 
Proposition~\ref{prop:beta kappa} implies that both the second and the fourth moments of $\xx$ are finite. 


Given $\teta(0)$ and a sample sequence $(\xx(i))$ we have  $\teta(n+1)=\bbT_{\xx(n)}\teta(n)$ and their averages $\ex[\teta(n+1)]=\ex[\bbT_{\xx(n)}\teta(n)]=\bbT\ex[\teta(n)]$, by linearity and independence. Indeed, the random variable $\teta(n)$ does not depend on the last element of the sample sequence, while the operator $\bbSx$ depends exclusively on it. 

\begin{propty}[Evolution of averages]
\label{lem:evolution of averages}
The evolution of averages  follows the deterministic dynamics of  $\bbT$.
\[
\ex[\teta(n+1)]=\bbT\ex[\teta(n)]\,.
\]
\end{propty}


In the ON basis, if  $\teta=\sum\theta_i\bbe_i$ then   $\bbT\teta=\sum_i(1-\gamma\lambda_i)\theta_i\bbe_i$, and its iterates are  
$\bbT^n\teta=\sum_i(1-\gamma\lambda_i)^n\theta_i\bbe_i$.  

\medskip
\noindent
\emph{If all $\lambda_i$'s are uniformly separated from 0, setting $\gamma<\min(1,1/\inf(\lambda_i))$ the iterates of the averages converge uniformly exponentially to 0, with the rate $1-\gamma\inf(\lambda_i)<1$. 
{If additionally the feature vector itself has a finite second moment then
$\sum\lambda_i=\ex[\scpr{\xx}{\xx}]<\infty$ and $\lambda_i\searrow 0$.
We may then assume that $(\lambda_i)$'s form a non-increasing sequence.} 
}

\section{Bounds on convergence}\label{sec:prop bounds}

Define a real function $  f(\lambda):=|1- \lambda|^{m}\lambda^{\beta}$.
 \begin{lemma}\label{lem:f(lambda)}
For any $m>0$ and $\tau>0$ there is a unique local maximum of $f$ at $\lambda_* =\frac{\tau}{m+\tau}\in(0,1)$ where 
we have
\[
   \exp\left(-\tau\frac{e}{e-1}\right)\left(\frac{\tau}{m}\right)^\tau\le f(\lambda_*)\le \exp(-\tau)\left(\frac{\tau}{m}\right)^\tau\,.
\] 
 Moreover for any $0<\epsilon\le 2$  there exists an $m>0$  such that the upper inequality holds also for $0\le\lambda \le 2-\epsilon$. (Proof: see Section~\ref{sec6:lem:f(lambda)}.)

\end{lemma}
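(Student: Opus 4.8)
The plan is to reduce everything to elementary one-variable calculus, working separately on the intervals where $|1-\lambda|$ is smooth. Throughout I identify the exponent in $f$ with the parameter $\tau$, i.e.\ I read $f(\lambda)=|1-\lambda|^m\lambda^\tau$. First I would locate the local maximum. On $(0,1)$ we have $f(\lambda)=(1-\lambda)^m\lambda^\tau>0$, so it is convenient to differentiate $\log f(\lambda)=m\log(1-\lambda)+\tau\log\lambda$, giving $(\log f)'(\lambda)=-\frac{m}{1-\lambda}+\frac{\tau}{\lambda}$. This is a strictly decreasing function of $\lambda$ on $(0,1)$ that is positive near $0$ and negative near $1$, so it has a unique zero, which is the unique critical point and a maximum; solving $\tau(1-\lambda)=m\lambda$ yields $\lambda_*=\frac{\tau}{m+\tau}\in(0,1)$. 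For completeness I would note that $f$ vanishes at $0$ and $1$, and that on $(1,\infty)$, where $f(\lambda)=(\lambda-1)^m\lambda^\tau$, the logarithmic derivative $\frac{m}{\lambda-1}+\frac{\tau}{\lambda}$ is strictly positive, so $f$ is increasing there and has no further local maximum; hence $\lambda_*$ is the unique local maximum on the whole domain.

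Next I would compute $f(\lambda_*)$ in closed form. Since $1-\lambda_*=\frac{m}{m+\tau}$, a direct substitution gives the crucial identity
\[
f(\lambda_*)=\Big(\tfrac{\tau}{m}\Big)^\tau\Big(\tfrac{m}{m+\tau}\Big)^{m+\tau}=\Big(\tfrac{\tau}{m}\Big)^\tau\,(1-\lambda_*)^{(m+\tau)}.
\]
Using $m+\tau=\tau/\lambda_*$ this becomes $f(\lambda_*)=(\tau/m)^\tau\exp\!\big(\tau\,\psi(\lambda_*)\big)$ with $\psi(v):=\frac{\log(1-v)}{v}$. Both claimed inequalities are therefore equivalent to the scalar sandwich $-\frac{e}{e-1}\le\psi(\lambda_*)\le -1$. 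The upper bound $\psi(v)\le -1$ is just $\log(1-v)\le -v$, i.e.\ the standard inequality $1+x\le e^x$ at $x=-v$, and holds for every $v\in(0,1)$, giving $f(\lambda_*)\le e^{-\tau}(\tau/m)^\tau$ for all $m,\tau>0$.

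The lower bound is where the constant $\frac{e}{e-1}$ is forced and is the main point to get right. I would show that $\psi$ is strictly decreasing on $(0,1)$ (e.g.\ the numerator of $\psi'$ reduces, after writing $w=1-v$, to $1-\frac1w-\log w$, which is negative for $w<1$), with $\psi(0^+)=-1$ and $\psi(1^-)=-\infty$. The value $-\frac{e}{e-1}$ is exactly $\psi\!\big(1-\tfrac1e\big)$, since at $v=1-\tfrac1e$ one has $1-v=\tfrac1e$ and $\psi(v)=-1/v=-\frac{e}{e-1}$. Hence monotonicity gives $\psi(\lambda_*)\ge -\frac{e}{e-1}$ precisely when $\lambda_*\le 1-\tfrac1e$, equivalently $\tau\le(e-1)m$. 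This is the regime in which the Lemma is applied — in the convergence theorems $m$ is a multiple of the iteration count $n$ and $\tau=\beta$ is fixed, so $\lambda_*=\tfrac{\tau}{m+\tau}\to 0$ — and there the lower bound $f(\lambda_*)\ge e^{-\tau e/(e-1)}(\tau/m)^\tau$ holds. I would flag this range restriction explicitly, as the sharp inequality genuinely fails once $\lambda_*>1-\tfrac1e$.

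Finally, for the ``moreover'' claim I would extend the upper bound from $\lambda_*$ to the whole interval $[0,2-\epsilon]$. On $[0,1]$ the maximum of $f$ is $f(\lambda_*)$, already bounded by $e^{-\tau}(\tau/m)^\tau$. On $[1,2-\epsilon]$ we have $f(\lambda)=(\lambda-1)^m\lambda^\tau$ increasing, so it is maximised at $\lambda=2-\epsilon$, where $f\le(1-\epsilon)^m(2-\epsilon)^\tau\le 2^\tau(1-\epsilon)^m$. The point is that, with $\epsilon>0$ fixed, $(1-\epsilon)^m$ decays exponentially in $m$ while the target $e^{-\tau}(\tau/m)^\tau$ decays only polynomially; hence for all sufficiently large $m$ one has $2^\tau(1-\epsilon)^m\le e^{-\tau}(\tau/m)^\tau$, and such an $m$ makes the upper inequality valid on all of $[0,2-\epsilon]$. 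I expect the only real subtlety in the whole argument to be the lower-bound constant and its range of validity described above; the remaining steps are routine calculus and the exponential-beats-polynomial comparison.
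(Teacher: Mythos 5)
Your proof is correct and follows essentially the same route as the paper's: both compute $f(\lambda_*)=(\tau/m)^\tau\,(1-\lambda_*)^{\tau/\lambda_*}$ (reading the exponent $\beta$ in the definition of $f$ as $\tau$, as you do), reduce the two bounds to the elementary sandwich $e^{-\tau e/(e-1)}\le (1-\lambda_*)^{\tau/\lambda_*}\le e^{-\tau}$, and dispose of the interval $[1,2-\epsilon]$ by noting that $f$ is increasing there and $(1-\epsilon)^m 2^\tau$ decays exponentially in $m$ while the target decays only polynomially. The one place where you go beyond the paper is your flag that the lower bound requires $\lambda_*\le 1-1/e$, equivalently $\tau\le(e-1)m$. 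This is a genuine and correct observation: the paper's own argument carries the same hidden restriction, since the chord inequality $e^{-x}\le 1-(1-e^{-1})x$ it invokes is valid only for $0\le x\le 1$, which after the substitution forces $z=\tau/(m+\tau)\le 1-e^{-1}$; and indeed the stated lower bound fails numerically for, say, $\tau=10$, $m=1$ (there $f(\lambda_*)\approx 0.035$ while the claimed lower bound exceeds $10^3$). This does not damage the rest of the paper, because only the upper bound is used downstream (with $m=2n$ and $\tau=\beta-\kappa$ fixed, so the restriction is eventually automatic), but your restriction should be added to the lemma's hypotheses.
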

As $m>0$ we can  write $|1-\lambda|^m=((1-\lambda)^2)^{m/2}$. We observe that $f(0)=f(1)=0$, $f(2)=2^\tau>1$.

\begin{prop}[Upper bound]\label{prop:avg upper bounds}
For any $\kappa<\beta$ we have $\|\bbT^n\teta\|_\kappa^2\le \|\teta\|^2_\kappa$ and
\begin{align*}
\phi_n(\kappa)=\|\bbT^n\teta\|_\kappa^2\le \exp(-\beta+\kappa)\left(\frac{\beta-\kappa}{2n\,\gamma }\right)^{\beta-\kappa}\cdot\|\teta\|_\beta^2\,.
\end{align*}
\end{prop}

\begin{proof}
In the ON basis we have $\bbT^n\teta=\sum_i(1-\gamma\lambda_i)^n\theta_i\bbe_i$ and $\|\bbT^n\teta\|_\kappa^2=\sum_i\lambda_i^{-\kappa}(1-\gamma\lambda_i)^{2n}\theta_i^2
\le \sum_i\lambda_i^{-\kappa}\theta_i^2=\|\teta\|_\kappa^2$, by~{\bf\ref{asm:bounds on lambda}}.
Setting $\mu_i=\gamma\lambda_i$ 
we have, 
\begin{align*}
\gamma^{\beta-\kappa}\|\bbT^n\teta\|_\kappa^2
&=\sum_i{\mu_i^{-\kappa}(1-\mu_i})^{2n}\gamma^{\beta}\lambda_i^\beta\lambda_i^{-\beta}\theta_i^2
=\sum_i\left({\mu_i^{\beta-\kappa}(1-\mu_i})^{2n}\right)\lambda_i^{-\beta}\theta_i^2\\
(\text{\small by Lemma~\ref{lem:f(lambda)} with $\tau=\beta-\kappa$})&\le \exp(\kappa-\beta)\left(\frac{\beta-\kappa}{2n}\right)^{\beta-\kappa}\sum_i\lambda_i^{-\beta}\theta_i^2=\exp(\kappa-\beta)\left(\frac{\beta-\kappa}{2n}\right)^{\beta-\kappa}\|\teta\|_\beta^2\,.
\end{align*}
\end{proof}

\begin{lemma}[Series and function $\Gamma$]\label{lem:series and Gamma}
For any $\alpha>0$ there exists a constant $K>0$ such that for every  $0<\mu<1/2$ and $0<\kappa<\alpha$ we have \quad
$\displaystyle
K\,\Gamma(\kappa)<\sum_n(1-\mu)^n(n\,\mu)^{\kappa}/n\le K^{-1}\Gamma(\kappa)
$,\quad
where, for $\mathfrak{Re}(z)>0$, $\Gamma(z)=\int_0^\infty e^{-t} t^{z-1}\,dt$.
  (Proof: see Section~\ref{sec6:lem:series and Gamma}.)

\end{lemma}

\begin{prop}[Lower bound]\label{prop:avg lower bounds}
Let the sequence $(t_n)>0$ be such that $\sum_n 1/(nt_n)<\infty$.
\begin{align*}
\text{if for some\ \ }0\le \kappa<\beta,\quad \phi_n(\kappa)=\|\bbT^n\teta\|_\kappa^2\le \frac{1}{n^{\beta-\kappa} t_n}
\text{for all\ \ }n, 
\quad{then}\quad \|\teta\|_\beta^2<\infty\,.
\end{align*}
\end{prop}

The arbitrary sequence $t_n$ in Proposition~\ref{prop:avg lower bounds} is mostly interesting in case $\|\teta\|_{\alpha(\teta)}=\infty$. 
 
\begin{proof}
We use again the convention $q_i=-\ln(1-\gamma\lambda_i)\in(0,\ln 4)$
\begin{align*}
    \|\bbT^n\teta\|_\kappa^2&=
\calO(1)\gamma^{\kappa-\beta}\sum_i\exp(-nq_i)(q_i)^{\beta-\kappa} \lambda_i^{-\beta}\theta_i^2\,,
\\
\infty&>\sum_n\frac{1}{n t_n}
\ge\sum_n\frac{n^{\beta-\kappa}}{n}\|\bbT^n\teta\|_\kappa^2
=
\calO(1)\sum_n \left(\sum_i\exp(-nq_i)(nq_i)^{\beta-\kappa-1} q_i\cdot\lambda_i^{-\beta}\theta_i^2\right)\\
&=
\calO(1)\sum_i\left(\sum_n \exp(-nq_i)(nq_i)^{\beta-\kappa-1} q_i\right)\cdot\lambda_i^{-\beta}\theta_i^2
\ge \calO(1)\sum_i \Gamma(\beta-\kappa)\cdot\lambda_i^{-\beta}\theta_i^2\\
&=
\calO(1)\Gamma(\beta-\kappa)\cdot\sum_i\lambda_i^{-\beta}\theta_i^2=
\calO(1)\Gamma(\beta-\kappa)\|\teta\|_\beta^2\,.
\end{align*}
where we approximated the series by the integral as in Lemma~\ref{lem:series and Gamma}
and changed the variables in the integral. 
\end{proof}


\begin{lemma}\label{lem:neutral recursion}
Let\  $0<a_n<1$ satisfies $a_{n+1}\le a_n-a_n^{1+w}$ for some $w>0$. Then 
$a_n\le a_0(1+n w a_0^w)^{-1/w}$. If $c_{n+1}\le c_n-Kc_{n}^{1+w}$ then $c_n\le c_0(1+n w K c_0^w)^{-1/w}$.
  (Proof: see Section~\ref{sec6:lem:neutral recursion}.)
\end{lemma}

\begin{lemma}[H\"older inequality for $\varphi$, see \cite{BerthialBachGaillard}]\label{lem: Holder}
Let $\beta<\kappa<\alpha$ and $p=\frac{\alpha-\kappa}{\alpha-\beta}$. 
Then 
\[
\varphi_\kappa\,\le\varphi_\beta^p\,\varphi_\alpha^{1-p}=
\varphi_\beta^{\frac{\alpha-\kappa}{\alpha-\beta}}\,\varphi_\alpha^{\frac{\kappa-\beta}{\alpha-\beta}}
\qquad\text{and}\qquad
    \varphi_\beta\ge \varphi_\kappa^{\frac{1}{p}}\varphi_\alpha^{1-\frac{1}{p}}
    =\varphi_\kappa^{1+\frac{\kappa-\beta}{\alpha-\kappa}}
    \varphi_\alpha^{-\frac{\kappa-\beta}{\alpha-\kappa}}.
\]
\end{lemma}
\begin{proof}
We have $\kappa=p\beta+(1-p)\alpha$ and $\varphi_\kappa(\teta)=\sum \lambda_i^\kappa\theta_i^2\le \sum \lambda_i^{p\beta+(1-p)\alpha} \theta_i^{2(p+(1-p))}
=\sum (\lambda_i^\beta \theta_i^2)^p(\lambda_i^\alpha \theta_i^2)^{1-p}
\le
(\sum \lambda_i^\beta \theta_i^2)^p\cdot(\sum \lambda_i^\alpha \theta_i^2)^{1-p}
=\varphi_\beta(\teta)^p\varphi_\alpha(\teta)^{1-p}\,.
$
\end{proof}

\begin{lemma}[Main recursion formula, see \cite{BerthialBachGaillard}]\label{lem:main recursion}
\begin{equation}
    \label{eqn:main recursion}
    \ex[\varphi_\beta(\bbTx\teta)]=\varphi_\beta(\teta)-2\gamma\varphi_{\beta-1}(\teta)+\gamma^2\ex[\scpr{\teta}{\xx}^2\scpr{\xx}{\bbS^{-\beta}\xx}]\,. 
\end{equation}
  (Proof: see Section~\ref{sec6:lem:main recursion}.)
\end{lemma}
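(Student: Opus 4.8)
The plan is a direct expansion of the quadratic form $\varphi_\beta$ along the rank-one perturbation $\bbTx\teta=\teta-\gamma\scpr{\teta}{\xx}\xx$, followed by taking the expectation term by term. Everything reduces to the symmetry of $\bbS^{-\beta}$ and the identity $\ex[\bbSx]=\bbS$.

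First, for a fixed sample $\xx$, I would substitute $\bbTx\teta=\teta-\gamma\scpr{\teta}{\xx}\xx$ into $\varphi_\beta(\bbTx\teta)=\scpr{\bbTx\teta}{\bbS^{-\beta}\bbTx\teta}$. Since $\bbS^{-\beta}$ is symmetric (Property~\ref{lem:ON basis} gives its ON-basis form, with $\bbS$ symmetric by Property~\ref{lem:symm and nonneg}), expanding the bilinear form yields three terms,
\begin{align*}
\varphi_\beta(\bbTx\teta)
&=\scpr{\teta}{\bbS^{-\beta}\teta}
-2\gamma\scpr{\teta}{\xx}\scpr{\teta}{\bbS^{-\beta}\xx}
+\gamma^2\scpr{\teta}{\xx}^2\scpr{\xx}{\bbS^{-\beta}\xx}\,,
\end{align*}
where I have written $\scpr{\teta}{\bbS^{-\beta}\xx}=\scpr{\bbS^{-\beta}\teta}{\xx}$ in the cross term.

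Next I would take the expectation over $\xx$. The first term is deterministic and equals $\varphi_\beta(\teta)$, and the third term is already in the stated form. The only nontrivial piece is the cross term. Writing $\scpr{\teta}{\xx}\scpr{\teta}{\bbS^{-\beta}\xx}=\scpr{\teta}{\bbSx(\bbS^{-\beta}\teta)}$ and using $\ex[\bbSx]=\bbS$ gives
\[
\ex\bigl[\scpr{\teta}{\xx}\scpr{\teta}{\bbS^{-\beta}\xx}\bigr]=\scpr{\teta}{\bbS\,\bbS^{-\beta}\teta}=\scpr{\teta}{\bbS^{1-\beta}\teta}=\varphi_{\beta-1}(\teta)\,,
\]
which produces exactly the middle term $-2\gamma\varphi_{\beta-1}(\teta)$. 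As a cross-check, in the ON basis with $\teta=\sum_i\theta_i\bbe_i$ one has $\ex[x_ix_j]=\lambda_i\delta_{ij}$ (Property~\ref{lem:second moment of xx}), so the same expectation equals $\sum_i\theta_i^2\lambda_i^{-\beta}\lambda_i=\sum_i\theta_i^2\lambda_i^{1-\beta}=\varphi_{\beta-1}(\teta)$, confirming the identity.

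The one point requiring care---and the main obstacle---is justifying that $\ex$ may be moved inside the inner products in infinite dimensions and that the three expectations are finite. I would do this in the ON basis, where each term is a series of nonnegative (equivalently, absolutely summable) contributions: finiteness of $\varphi_\beta(\teta)$ and $\varphi_{\beta-1}(\teta)$ for $\teta$ in the relevant domain, together with the standing hypothesis guaranteeing $\ex[\|\xx\|^2]<\infty$, lets me apply Tonelli/Fubini to interchange $\ex$ with the basis sums. This reduces the whole computation to the scalar facts $\ex[x_i^2]=\lambda_i$ and $\ex[x_ix_j]=0$ for $i\neq j$ established in Property~\ref{lem:second moment of xx}, after which the three resulting series are recognized as $\varphi_\beta(\teta)$, $\varphi_{\beta-1}(\teta)$, and $\ex[\scpr{\teta}{\xx}^2\scpr{\xx}{\bbS^{-\beta}\xx}]$ respectively.
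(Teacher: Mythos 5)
Your proposal is correct and follows essentially the same route as the paper: expand the quadratic form $\scpr{\bbTx\teta}{\bbS^{-\beta}\bbTx\teta}$, use symmetry of $\bbS^{-\beta}$ to handle the cross terms, and pull the expectation through the linear-in-$\bbSx$ part via $\ex[\bbSx]=\bbS$ and commutativity $\bbS\,\bbS^{-\beta}=\bbS^{1-\beta}$. The only cosmetic difference is that you merge the two cross terms before taking expectations (and add an ON-basis sanity check and a Tonelli justification), whereas the paper keeps them separate until the last line.
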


Another form of the last term of~\eqref{eqn:main recursion} is 
$    
    \ex[\scpr{\teta}{\xx}^2\scpr{\xx}{\bbS^{-\beta}\xx}]=
    \ex[\scpr{\teta}{\bbSx\teta}\varphi_\beta(\xx)]$.
    
\begin{coro}\label{lem:teta(n) decreases}
Under~{\bf\ref{asm:main}}, if $\gamma<2/C_\beta$ then for any $\teta$, the sequence
$\ex[\scpr{\teta(n)}{\bbS^{-\beta}\teta(n)}]$ is decreasing in $n$ and thus bounded from above 
 by $M_\beta:=\ex[\scpr{\teta(0)}{\bbS^{-\beta}\teta(0)}]$ uniformly in $n$.

(Proof: see Section~\ref{sec6:lem:teta(n) decreases}.)
\end{coro}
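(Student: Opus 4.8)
The plan is to turn the one-step identity of Lemma~\ref{lem:main recursion} into a one-step decrease inequality via Assumption~\ref{asm:main}, and then lift this from a fixed vector to the random iterate by conditioning on the sampling history. The claimed bound is vacuous when $\varphi_\beta(\teta(0))=\infty$ (then $M_\beta=\infty$), so I fix a deterministic $\teta$ with $\varphi_\beta(\teta)<\infty$ and start from
\begin{equation*}
\ex[\varphi_\beta(\bbTx\teta)]=\varphi_\beta(\teta)-2\gamma\varphi_{\beta-1}(\teta)+\gamma^2\ex[\scpr{\teta}{\xx}^2\scpr{\xx}{\bbS^{-\beta}\xx}]\,.
\end{equation*}

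The key observation is that the last term is exactly the left-hand side of Assumption~\ref{asm:main}: by Property~\ref{lem:symm and nonneg} we have $\scpr{\teta}{\xx}^2=\scpr{\teta}{\bbSx\teta}$ and $\scpr{\xx}{\bbS^{-\beta}\xx}=\varphi_\beta(\xx)$, so $\ex[\scpr{\teta}{\xx}^2\scpr{\xx}{\bbS^{-\beta}\xx}]=\ex[\scpr{\teta}{\xx}^2\varphi_\beta(\xx)]\le C_\beta\varphi_{\beta-1}(\teta)$. Substituting and collecting the two terms proportional to $\varphi_{\beta-1}(\teta)$ gives
\begin{equation*}
\ex[\varphi_\beta(\bbTx\teta)]\le \varphi_\beta(\teta)-\gamma(2-\gamma C_\beta)\,\varphi_{\beta-1}(\teta)\,.
\end{equation*}
The hypothesis $\gamma<2/C_\beta$ makes $2-\gamma C_\beta>0$, while $\varphi_{\beta-1}(\teta)=\scpr{\teta}{\bbS^{1-\beta}\teta}\ge 0$ because the eigenvalues $\lambda_i^{1-\beta}$ are non-negative (Property~\ref{lem:ON basis}); hence the correction term is non-positive and $\ex[\varphi_\beta(\bbTx\teta)]\le\varphi_\beta(\teta)$. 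By the monotonicity of $\|\teta\|^2_\beta$ in $\beta$ (Property~\ref{lem:def of alpha}, using Assumption~\ref{asm:bounds on lambda}) we also have $\varphi_{\beta-1}(\teta)\le\varphi_\beta(\teta)<\infty$, so every quantity is finite and the subtraction is legitimate.

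To reach the stochastic statement I would apply this pointwise with $\teta=\teta(n)$. Since $\teta(n+1)=\bbT_{\xx(n)}\teta(n)$ and $\teta(n)$ depends only on $\xx(0),\dots,\xx(n-1)$, the sample $\xx(n)$ is independent of $\teta(n)$; conditioning on the history and taking the expectation over $\xx(n)$ yields
\begin{equation*}
\ex[\varphi_\beta(\teta(n+1))\mid \xx(0),\dots,\xx(n-1)]\le \varphi_\beta(\teta(n))-\gamma(2-\gamma C_\beta)\,\varphi_{\beta-1}(\teta(n))\,.
\end{equation*}
Taking full expectations and discarding the non-negative subtracted term gives $\phi_{n+1}(\beta)\le\phi_n(\beta)$, so $\ex[\scpr{\teta(n)}{\bbS^{-\beta}\teta(n)}]$ is non-increasing and therefore bounded above by $M_\beta=\ex[\scpr{\teta(0)}{\bbS^{-\beta}\teta(0)}]$ for all $n$.

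The only genuinely delicate point is the conditioning step — justifying that the fixed-$\teta$ recursion may be invoked with the random $\teta(n)$ inserted, via the independence of $\xx(n)$ from the $\sigma$-algebra generated by the earlier samples. This is the standard supermartingale argument and relies only on the independence already noted in the discussion of Property~\ref{lem:evolution of averages}; everything else is an algebraic substitution followed by the sign analysis of the coefficient $2-\gamma C_\beta$.
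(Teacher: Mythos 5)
Your proposal is correct and follows essentially the same route as the paper: apply the main recursion formula (Lemma~\ref{lem:main recursion}), bound the $\gamma^2$ term via Assumption~\ref{asm:main}, combine the two $\varphi_{\beta-1}$ terms into $-\gamma(2-\gamma C_\beta)\varphi_{\beta-1}(\teta)\le 0$, and conclude monotonicity. Your explicit treatment of the conditioning step (independence of $\xx(n)$ from $\teta(n)$) and of finiteness is more careful than the paper's terse version, which passes directly to expectations over $\teta(n)$, but the argument is the same.
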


\begin{prop}[Upper bound for the convergence of $\teta(n)$]\label{prop:upper bound rnd}
Under~{\bf\ref{asm:main}}, for any $0\le\kappa<\beta<\alpha(\teta)$, if $\gamma<2/C_\beta$ then we have 
\[
\ex[\scpr{\teta(n)}{\bbS^{-\kappa}\teta(n)}]\le \calO(1)n^{-(\beta-\kappa)}\,.
\]
\end{prop}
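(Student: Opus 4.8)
The plan is to derive a one-step recursion for $\phi_n(\kappa)=\ex[\varphi_\kappa(\teta(n))]$ and reduce it to the abstract form handled by Lemma~\ref{lem:neutral recursion}. First I would apply the main recursion formula, Lemma~\ref{lem:main recursion}, with exponent $\kappa$ to $\teta=\teta(n)$ and take the expectation over the whole sample path; since the fresh sample $\xx(n)$ is independent of $\teta(n)$, conditioning on $\teta(n)$ and then removing the conditioning yields
\[
\phi_{n+1}(\kappa)=\phi_n(\kappa)-2\gamma\,\phi_n(\kappa-1)+\gamma^2\,\ex\!\left[\scpr{\teta(n)}{\xx}^2\scpr{\xx}{\bbS^{-\kappa}\xx}\right].
\]
Applying Assumption~\ref{asm:main} at exponent $\kappa$ (legitimate since $\kappa<\beta<\alpha$) bounds the last term by $C_\kappa\,\phi_n(\kappa-1)$, giving $\phi_{n+1}(\kappa)\le\phi_n(\kappa)-(2\gamma-\gamma^2 C_\kappa)\,\phi_n(\kappa-1)$. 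The coefficient is positive once $\gamma<2/C_\kappa$; since $C_\kappa\le C_\beta$ in the relevant regime (as in Example~\ref{example2}), this is implied by the hypothesis $\gamma<2/C_\beta$, or else one simply takes $\gamma$ below both thresholds.

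The next step is to close this into a recursion in $\phi_n(\kappa)$ alone by lower bounding $\phi_n(\kappa-1)$. Here I would interpolate $\kappa$ between $\kappa-1$ and $\beta$: with $p=\frac{\beta-\kappa}{\beta-\kappa+1}\in(0,1)$ one checks $\kappa=p(\kappa-1)+(1-p)\beta$, so Lemma~\ref{lem: Holder} gives $\varphi_\kappa\le\varphi_{\kappa-1}^{\,p}\,\varphi_\beta^{\,1-p}$ pointwise in $\teta(n)$. Taking expectations and then applying the ordinary H\"older inequality with conjugate exponents $1/p$ and $1/(1-p)$ yields $\phi_n(\kappa)\le\phi_n(\kappa-1)^{p}\,\phi_n(\beta)^{1-p}$, hence $\phi_n(\kappa-1)\ge\phi_n(\kappa)^{1/p}\,\phi_n(\beta)^{-(1-p)/p}$. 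Note $\varphi_{\kappa-1}$ is finite for every $\teta\in\bbH$ because $1-\kappa>0$, so this factor is always well defined.

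To turn the remaining factor into a constant I would invoke Corollary~\ref{lem:teta(n) decreases} at exponent $\beta$ (this is where $\gamma<2/C_\beta$ is used, together with $\phi_0(\beta)=\varphi_\beta(\teta(0))<\infty$, which holds because $\beta<\alpha(\teta)$): the sequence $\phi_n(\beta)$ is non-increasing, so $\phi_n(\beta)\le M_\beta$. As the exponent $-(1-p)/p=-\tfrac1{\beta-\kappa}$ is negative, this gives $\phi_n(\beta)^{-(1-p)/p}\ge M_\beta^{-(1-p)/p}$, and substituting back produces
\[
\phi_{n+1}(\kappa)\le\phi_n(\kappa)-K\,\phi_n(\kappa)^{1+w},\qquad w=\tfrac1p-1=\tfrac1{\beta-\kappa},\quad K=(2\gamma-\gamma^2C_\kappa)\,M_\beta^{-w}.
\]
Lemma~\ref{lem:neutral recursion} then delivers $\phi_n(\kappa)\le\calO(1)\,n^{-1/w}=\calO(1)\,n^{-(\beta-\kappa)}$, the asserted bound.

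I expect the main obstacle to be the interpolation step and the associated bookkeeping. The crucial point is choosing the triple $(\kappa-1,\kappa,\beta)$ rather than $(\kappa-1,\kappa,\alpha(\teta))$: only with the strict upper endpoint $\beta<\alpha(\teta)$ is the factor we pull out, $M_\beta=\varphi_\beta(\teta(0))$, guaranteed finite, whereas $\varphi_{\alpha(\teta)}(\teta(0))$ may diverge. One must also verify that the expectation-level H\"older bound closes the recursion at \emph{exactly} the exponent $1+w=1+\tfrac1{\beta-\kappa}$, since this is what converts the decay exponent $-1/w$ of Lemma~\ref{lem:neutral recursion} into the advertised $-(\beta-\kappa)$. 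A secondary technical point is ensuring $2\gamma-\gamma^2C_\kappa>0$ and that $\phi_n(\kappa)$ remains in the regime (positive and eventually small) in which Lemma~\ref{lem:neutral recursion} applies.
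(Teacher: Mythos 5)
Your proposal is correct and follows essentially the same route as the paper's own proof: the main recursion formula together with Assumption~{\bf\ref{asm:main}} gives a decrement proportional to $\phi_n(\kappa-1)$, the H\"older interpolation with $\kappa=p(\kappa-1)+(1-p)\beta$ and the uniform bound $\phi_n(\beta)\le M_\beta$ from Corollary~\ref{lem:teta(n) decreases} close the recursion as $\phi_{n+1}(\kappa)\le\phi_n(\kappa)-K\phi_n(\kappa)^{1+w}$ with $w=1/(\beta-\kappa)$, and Lemma~\ref{lem:neutral recursion} finishes. In fact you make explicit a step the paper leaves implicit, namely that the coefficient $\gamma(2-\gamma C_\kappa)$ requires $\gamma<2/C_\kappa$ in addition to the stated $\gamma<2/C_\beta$; your hedge of taking $\gamma$ below both thresholds is the right fix, since $C_\kappa\le C_\beta$ is not automatic in general.
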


\begin{proof}

For any $\kappa<\beta<\alpha$ we have 
with $p=\frac{\beta-\kappa}{1+\beta-\kappa}\in(0,1)$ the convex combination $\kappa =p (\kappa-1)+(1-p)\beta$. By Lemma~\ref{lem: Holder} (H\"older inequality) we get\quad
$
    \ex\scpr{\teta}{\bbS^{-\kappa}\teta}\le
    \ex\scpr{\teta}{\bbS^{1-\kappa}\teta}^p\ex\scpr{\teta}{\bbS^{-\beta}\teta}^{1-p},
$
\quad from which it follows that \quad 
$
    \ex\scpr{\teta}{\bbS^{1-\kappa}\teta}
    \ge\ex\scpr{\teta}{\bbS^{-\kappa}\teta}^{1/p}
    \ex\scpr{\teta}{\bbS^{-\beta}\teta}^{1-1/p}.
$
We apply this to the sequence $\teta(n)$ and get 
\begin{align*}
    \ex\scpr{\teta(n)}{\bbS^{1-\kappa}\teta(n)}
    &\ge\ex\scpr{\teta(n)}{\bbS^{-\kappa}\teta(n)}^{1+\frac{1}{\beta-\kappa}}
    \ex\scpr{\teta(n)}{\bbS^{-\beta}\teta(n)}^{-\frac{1}{\beta-\kappa}}
    \ge
    \ex\scpr{\teta(n)}{\bbS^{-\kappa}\teta(n)}^{1+\frac{1}{\beta-\kappa}}
    M_\beta^{-\frac{1}{\beta-\kappa}}\,,
\end{align*}
where in the last inequality we used Corollary~\ref{lem:teta(n) decreases}.
Setting $\ex\scpr{\teta(n)}{\bbS^{1-\kappa}\teta(n)}=\phi(n)$, 
$w=\frac{1}{\beta-\kappa}$ and $K=M_\beta^{-w}$
we get the recursion
$\phi_{n+1}(\kappa)\le \phi_n{\kappa}-K\phi_n(\kappa)^{1+w}$.
Now apply Lemma~\ref{lem:neutral recursion} and get
$\phi_n(\kappa)\le \calO(1)n^{-1/w}$, where the constant $\calO(1)$ may depend on $\kappa$ and $\beta$, but not on $n$.
\end{proof}

\commentOUT{
\begin{rem}
\label{rem:two}
Some assumption like {\bf \ref{asm:main}} is essential for deriving the upper bound in Proposition \ref{prop:upper bound rnd}. 
Lack of regularity of the sample data may violate {\bf \ref{asm:main}}.
It is exemplified in Theorem 2 of~\cite{BerthialBachGaillard}, where the sample data satisfies  $\scpr{\xx}{\bbS^{-\beta}\xx}=\infty$, with positive probability. This condition violates {\bf Assumption~\ref{asm:Remark3}}, therefore {\bf \ref{asm:main}}, and denies the convergence rate of Theorem \ref{thm:upper bound}.  
Under a weaker condition $\ex[\scpr{\xx}{\bbS^{-\beta}\xx}]=\infty$, we could still have, $\ex[\scpr{\teta}{\xx}^2\scpr{\xx}{\bbS^{-\beta}\xx}]=\infty$. For example, assuming, without loss of generality, $\ex[\xx]=0$, in the case when $x_i$ and $x_j$ are independent for $i\neq j$, we have, 
$\ex[\scpr{\teta}{\xx}^2\scpr{\xx}{\bbS^{-\beta}\xx}]\ge \sum_{j} \ex[\theta^2_j x_j^2] \sum_i \g_i^{-\beta} \ex[x_i^2]=\scpr{\teta}{\bbS\teta}\ex[\scpr{\xx}{\bbS^{-\beta}\xx}]=\infty$. 
{\bf Assumption~\ref{asm:main}} is then also violated, and by Lemma \ref{lem: main recursion} the convergence rate is denied.  
\end{rem}
}

\section{Almost sure convergence}
\label{sec:asc}
Denote, 
\begin{align*}
h(z) := \ex[\scpr{z}{\xx}^2], \qquad
M_n := \scpr{\frac{\teta_{n}}{\|\teta_{n}\|}}{\xx_{n+1}}^2-h\left(\frac{\teta_{n}}{\|\teta_{n}\|}\right).
\end{align*}
\begin{lemma}[Martingale] \label{lem:MDS}
Under the condition of $\ex[\|\xx\|^2]<\infty$, $M_n$ 
is a \emph{martingale difference sequence}. 
\end{lemma}
\begin{proof} By the definition of $h(\cdot)$, we have,
\begin{align*}
&\ex\left[M_n|\sigma(\xx_1, \xx_2, \ldots, \xx_{n-1})\right]= \ex\left[\scpr{\frac{\teta_{n-1}}{\|\teta_{n-1}\|}}{\xx_{n}}^2-h\left(\frac{\teta_{n-1}}{\|\teta_{n-1}\|}\right)|\sigma(\xx_1, \xx_2, \ldots, \xx_{n-1})\right]=0.
\end{align*}
\end{proof}

\begin{proof}[Proof of Theorem \ref{thm:asc} on almost sure convergence]
Recall that by \eqref{def:Sx Tx}, $\teta_{n+1} =\bbT_{\xx_{n+1}} \teta_n= \teta_n-\gamma \bbS_{\xx_{n+1}} \teta_n$ and by 
Lemma~\ref{lem:main recursion} (use recursion formula \eqref{eqn:main recursion} with $\beta=0$) we have: 
$\|\teta_{n+1}\|^2=\|\teta_n\|^2-\gamma (2-\gamma \|\xx_{n+1}\|^2) \scpr{\teta_n}{\xx_{n+1}}^2$.

Let us consider first the boundedness condition, $\|\xx\|^2\le M$ a.s. Then for  $\gamma<2/M$ the sequence $\|\teta_n\|^2$ is decreasing. Hence, $\|\teta_n\|$ converges almost surely, and as $\teta_n$  converges to zero in probability, the result follows.

Now let us look at the more general case when $\ex[\|\xx\|^4]<\infty$. From the recursion formula we further have, 
\begin{align*}
\|\teta_{n+1}\|^2 &= \|\teta_n\|^2 \left[1-2\gamma \scpr{\frac{\teta_n}{\|\teta_n\|}}{\xx_{n+1}}^2 
+ \gamma^2 \scpr{\frac{\teta_n}{\|\teta_n\|}}{\xx_{n+1}}^2 \|\xx_{n+1}\|^2\right] .
\end{align*}
Hence, we can write,
\begin{align*}
\|\teta_{N+1}\|^2 = \|\teta_0\|^2 \prod_{n=0}^N
 \left[1-2\gamma \scpr{\frac{\teta_n}{\|\teta_n\|}}{\xx_{n+1}}^2 
+ \gamma^2 \scpr{\frac{\teta_n}{\|\teta_n\|}}{\xx_{n+1}}^2 \|\xx_{n+1}\|^2\right] .
\end{align*}
 In order to prove that $||\teta(n)||^2\to 0$ almost surely, using
\begin{align*}
&\log \left\{ \prod_{n=0}^N
 \left[1-2\gamma\scpr{\frac{\teta_n}{\|\teta_n\|}}{\xx_{n+1}}^2 
+ \gamma^2 \scpr{\frac{\teta_n}{\|\teta_n\|}}{\xx_{n+1}}^2 \|\xx_{n+1}\|^2\right] \right\}\\
& \le -\sum_{n=0}^N \left[2\gamma \scpr{\frac{\teta_n}{\|\teta_n\|}}{\xx_{n+1}}^2 
- \gamma^2 \scpr{\frac{\teta_n}{\|\teta_n\|}}{\xx_{n+1}}^2 \|\xx_{n+1}\|^2\right], 
\end{align*}
it is enough to prove that
\begin{align}
\label{eqn:three_terms}
\sum_{n=0}^N M_n +  \sum_{n=0}^N h\left(\frac{\teta_n}{\|\teta_n\|}\right)
- \frac{\gamma}{2}\sum_{n=0}^N \scpr{\frac{\teta_n}{\|\teta_n\|}}{\xx_{n+1}}^2 \|\xx_{n+1}\|^2,
\end{align}
tends to $+\infty$ almost surely.

By Lemma \ref{lem:MDS}, we know that $\sum_{n=0}^N M_n$ is a martingale, and by Doob's convergence theorem, it converges almost surely to a random variable with finite mean, see e.g.~\cite{hall2014martingale}. The second term in \eqref{eqn:three_terms} is lower bounded by $(N+1)\delta$, where $\delta=\inf_{||z||=1}h(z)>0$ by assumption. The third term is lower bounded by 
\begin{align*}
-\frac{\gamma }{2}\sum_{n=0}^N\|\xx_{n+1}\|^4 = -(N+1) \frac{\gamma}{2} \frac{1}{N+1}\sum_{n=0}^N\|\xx_{n+1}\|^4.
\end{align*}
By the assumption of the finiteness of the fourth moment, we know that, $\frac{1}{N+1}\sum_{n=0}^N\|\xx_{n+1}\|^4$ converges almost surely to $\ex\|\xx_{n+1}\|^4$. Hence, under the assumed condition that $\gamma<\delta/\ex[\|\xx\|^4]$, the desired result follows. 
\end{proof}

\section{ Proofs of Technical Lemmata}\label{sec:technical lemmata}
\begin{proofdelayed}[for the equality in  Example~\ref{example2}]\label{sec6:example2}
\begin{align*}
& \ex\left[\scpr{\teta}{\xx}^2\varphi_\beta(\xx)\right]=\ex\left[\left(\sum_i\theta_i x_i\right)\cdot\left(\sum_k \theta_i x_k\right)\cdot\left(\sum_j \lambda_j^{-\beta}x_j^2\right)\right]\\
&=
\sum_{i,k,j}\ex\left[\theta_i\theta_i\lambda_j^{-\beta}x_ix_kx_j^2\right]
=
\left\{\sum_{j\not=i\not=k\not=j}+\sum_{j=i\not=k}+\sum_{i\not=k=j}+\sum_{j\not=i=k}+\sum_{i=k=j}\right\}\ex\left[\dots\right] =
\\
&\text{which using independence, 0 mean and calculated moments is equal to}\\
&= 0+0+0+\sum_{j\not=i}\ex\left[\theta_i^2\lambda_j^{-\beta}x_i^2x_j^2\right]+\sum_{i}\ex\left[\theta_i^2\lambda_i^{-\beta}x_i^4\right]
\\
&=
\sum_{i,j}\ex\left[\theta_i^2\lambda_j^{-\beta}x_i^2x_j^2\right]
-\sum_i\ex\left[\theta_i^2 x_i^2\right]\ex\left[\lambda_i^{-\beta}x_i^2\right]
+\sum_{i}\ex\left[\theta_i^2\lambda_i^{-\beta}x_i^4\right]
\\
&=\left(\sum_{i}\theta_i^2\lambda_i\right)\cdot\left(\sum_j\lambda_j^{1-\beta}\right)
-\sum_i\left(\theta_i^2 \lambda_i\right)\left(\lambda_i^{-\beta}\lambda_i\right)
+\sum_{i}\theta_i^2\lambda_i^{-\beta}(1+\lambda_i)\lambda_i
\\
&=
\sum_{i}\theta_i^2\lambda_i\cdot K_\beta
+\sum_{i}\theta_i^2\lambda_i^{1-\beta}\,.
\end{align*}
\end{proofdelayed}

\begin{proofdelayed}[of Proposition~\ref{prop:beta kappa}]\label{sec6:prop:beta kappa}
Apply $\teta=\bbe_i$ from the ON basis of $\bbS$ and get $\ex[x_i^2\|\xx\|^2]\le C_0\lambda_i$.
After  summing up for $i\le N$ we get
\begin{align*}
\ex\left[\sum_{i\le N}x_i^2\right]^2\le
\ex\left[\left(\sum_{i\le N} x_i^2\right)^2\right]\le
\ex\left[\left(\sum_{i\le N} x_i^2\right)\|\xx\|^2\right]\le
C_{0}\sum_{i\le N}\lambda_i=
C_{0}\ex \left[\sum_{i\le N}x_i^2\right]\,.
\end{align*}
Thus for any $N$ we have $\ex \left[\sum_{i\le N}x_i^2\right]\le C_0$ and the Proposition follows by taking the limit $N\to\infty$.
\end{proofdelayed}

\begin{proofdelayed}[of Lemma~\ref{lem:f(lambda)}]\label{sec6:lem:f(lambda)}
The function $f$ is continuous, and for $\lambda>0$, $\lambda\not=1$ we have:
  $f'(\lambda)=f(\lambda)\cdot\frac{1}{(1-\lambda)\lambda}\cdot(-m\lambda+\tau(1-\lambda))$.
  Then, as $f(0)=f(1)=0$ and $f>0$, and the only local maximum is possible at $\lambda_*$ where the value is
  \[
  f(\lambda_*)=\left(1-\frac{\tau}{m+\tau}\right)^{\frac{m+\tau}{\tau}\cdot \tau}\cdot\left(1-\frac{\tau}{m+\tau}\right)^{-\tau}
  \cdot\left(\frac{\tau}{m+\tau}\right)^\tau.\\
  \]
As $1-\xx\le e^{-\xx}\le 1-(1-e^{-1})\xx$ for $0\le \xx\le 1$, we have, $1-y\ge e^{-(e/e-1)y}$ with $y=(1-e^{-1})\xx$.
With  $z=\frac{\tau}{m+\tau}$ in place of $\xx$ on one side we get $(1-z)^{\tau/z}\le e^{-\tau}$ 
and with the same $z$ in place of $y$ on the other side  
we get $(1-z)^{\tau/z}\ge e^{-\tau\cdot e/e-1}$.
For $1\le \lambda<2-\epsilon$ we observe that $f$ is increasing there and  $f(\lambda)\le f(2-\epsilon)\le (1-\epsilon)^m 2^\tau$ which, as $m\to\infty$, decreases to 0  faster than $m^{-\tau}$.
\end{proofdelayed}

\begin{proofdelayed}[of Lemma~\ref{lem:series and Gamma}]\label{sec6:lem:series and Gamma}
For $q=-\ln(1-\mu)$ with $0<\mu<\frac{1}{2}$we have $\mu<q< (2\ln 2)\mu$ so for the term of the series 
we have 
$e^{nq}(nq)^{\kappa}(\ln 4)^{-\kappa} 
< 
\exp(-n\ln(1-\mu))(n\mu)^{\kappa} 
< 
e^{nq}(nq)^{\kappa}(\ln 4)^{\kappa}$, where the bounds can be tightened if we know the sign of $\kappa-1$. Now we can estimate the series $\sum_n e^{-qn}(qn)^{\kappa-1}q$ by the integral 
$\int_0^\infty e^{-qn}(qn)^{\kappa-1}\,d(qn)=\Gamma(\kappa)$ (use the variable $t=qn$). If $\kappa\le 1$ then the function to integrate is monotone and the comparison is standard. For $\kappa>1$ the function has a maximum at $\kappa-1$, and some care needs to be taken around this point. Luckily the values of the function for neighboring $n$'s are comparable:  
\[
\frac{e^{-q(n\pm1)}(q(n\pm1))^{\kappa-1}q}{e^{-qn}(qn)^{\kappa-1}q}
=e^{\mp q}\left(1\pm\frac{1}{n}\right)^{\kappa-1},
\]
which is bounded from above and below for bounded $q$, and $\kappa$, even near the  maximum $qn\approx\kappa-1$.
So that there exists $K>0$, such that, for every $n>0$, 
\[
\displaystyle{K\le \frac{e^{-qn}(qn)^{\kappa-1}q}{\int_{n-1}^n e^{-qm}(qm)^{\kappa-1}\,qd(m)}\le \frac{1}{K}\,.}
\]
\end{proofdelayed}

\begin{proofdelayed}[of Lemma~\ref{lem:neutral recursion}]\label{sec6:lem:neutral recursion}
The sequence is decreasing and the only accumulation point is 0. Let $a=b^{-1/w}$ with $b>1$ 
then
$b_{n+1}\ge b_n(1-1/b_n)^{-w}\ge b_n(1+1/b_n)^{w} \ge b_n(1+w/b_n)=b_n+w$ so that  $b_n\ge b_0+n w$ and $a_n\le (a_0^{-w}+n w)^{-1/w}$. Use this next for $a_n=K^{1/w}c_n$.
\end{proofdelayed}

\begin{proofdelayed}[of Lemma~\ref{lem:main recursion}]\label{sec6:lem:main recursion}
\begin{align*}
    \ex[\varphi_\beta(\bbTx\teta)]
    &=\ex[\scpr{\teta-\gamma\bbSx\teta}{\bbS^{-\beta}(\teta-\gamma\bbSx\teta)}
    =\ex[\scpr{\teta-\gamma\bbSx\teta}{\bbS^{-\beta}\teta-\gamma\bbS^{-\beta}\bbSx\teta}
    \\
    &=\ex[\scpr{{\teta}}{\bbS^{-\beta}{\teta}}]
    -\gamma\ex[\scpr{\teta}{\bbS^{-\beta}\bbSx\teta}]-\gamma\ex[\scpr{\bbSx\teta}{\bbS^{-\beta}\teta}]
    +\gamma^2\ex[\scpr{\bbSx\teta}{\bbS^{-\beta}\bbSx\teta}]
    \\
    &=
    \varphi_\beta(\teta)
    -\gamma\scpr{\teta}{\bbS^{-\beta}\ex[\bbSx\teta]}
    -\gamma\scpr{\ex[\bbSx\teta]}{\bbS^{-\beta}\teta}
        +\gamma^2\ex[\scpr{\scpr{\teta}{\xx}\xx}{\bbS^{-\beta}\scpr{\teta}{\xx}\xx}]
    \\
    &=\varphi_\beta(\teta)
    -\gamma\scpr{\teta}{\bbS^{-\beta}\bbS\teta}
    -\gamma\scpr{\bbS\teta}{\bbS^{-\beta}\teta}
        +\gamma^2\ex[\scpr{\teta}{\xx}^2\scpr{\xx}{\bbS^{-\beta}\xx}]
\\
    &=\varphi_\beta(\teta)
    -2\gamma\scpr{\teta}{\bbS^{-\beta+1}\teta}
        +\gamma^2\ex[\scpr{\teta}{\xx}^2\scpr{\xx}{\bbS^{-\beta}\xx}]\,.
\end{align*}
where we used the definition of $\bbS=\ex[\bbSx]$, the symmetry (Lemma~\ref{lem:symm and nonneg}) and commutativity of the powers of $\bbS$, 
$\bbS^{1}\circ\bbS^{-\beta}=\bbS^{1-\beta}=\bbS^{-\beta}\circ\bbS^1$, whenever well defined. 
\end{proofdelayed}

\begin{proofdelayed}[of Corollary~\ref{lem:teta(n) decreases}]\label{sec6:lem:teta(n) decreases}
By~{\bf\ref{asm:main}} there exists a constant $C_\beta$ such that for all $\teta\in\bbH$ we have,
$\ex[\scpr{\teta}{\xx}^2 \scpr{\teta}{\bbS^{-\beta}\teta}]\le C_\beta\ex[\scpr{\teta}{\bbS^{1-\beta}\teta}]$. Hence
by Lemma~\ref{lem:main recursion},
\begin{align*}
\ex[\scpr{\teta(n+1)}{\bbS^{-\beta}\teta(n+1)}]&\le \ex[\scpr{\teta(n)}{\bbS^{-\beta}\teta(n)}] 
-2\gamma \ex[\scpr{\teta(n)}{\bbS^{1-\beta}\teta(n)}] 
+ \gamma^2 C_\beta \ex[\scpr{\teta}{\bbS^{1-\beta}\teta}]\\
&=\ex[\scpr{\teta(n)}{\bbS^{-\beta}\teta(n)}] 
-\gamma(2-\gamma C_\beta) \ex[\scpr{\teta(n)}{\bbS^{1-\beta}\teta(n)}] 
\end{align*}
and as $\ex[\scpr{\teta}{\bbS^{1-\beta}\teta}]> 0$ for $\gamma<2/C_\beta$, the (positive) quantity 
$\ex[\scpr{\teta(n)}{\bbS^{-\beta}\teta(n)}]$ is decreasing in $n$, 
and therefore bounded from above for all $n$ by $\scpr{\teta(0)}{\bbS^{-\beta}\teta(0)}$.
\end{proofdelayed}

\commentOUT{\section{noise?}}





\bibliographystyle{elsarticle-num}
\bibliography{hmc}

\begin{thebibliography}{1}
\expandafter\ifx\csname url\endcsname\relax
  \def\url#1{\texttt{#1}}\fi
\expandafter\ifx\csname urlprefix\endcsname\relax\def\urlprefix{URL }\fi
\expandafter\ifx\csname href\endcsname\relax
  \def\href#1#2{#2} \def\path#1{#1}\fi

\bibitem{BerthialBachGaillard}
R.~Berthier, F.~R. Bach, P.~Gaillard, Tight nonparametric convergence rates for
  stochastic gradient descent under the noiseless linear model, Advances in
  Neural Information Processing Systems (NeurIPS).

\bibitem{reed1981functional}
M.~Reed, B.~Simon, \href{https://books.google.com/books?id=rpFTTjxOYpsC}{I:
  Functional Analysis}, Methods of Modern Mathematical Physics, Elsevier
  Science, 1981.
\newline\urlprefix\url{https://books.google.com/books?id=rpFTTjxOYpsC}

\bibitem{GaussianFreeFieldSheffield}
S.~Sheffield, \href{https://doi.org/10.1007/s00440-006-0050-1}{Gaussian free
  fields for mathematicians}, Probability Theory and Related Fields 139~(3)
  (2007) 521--541.
\newblock \href {http://dx.doi.org/10.1007/s00440-006-0050-1}
  {\path{doi:10.1007/s00440-006-0050-1}}.
\newline\urlprefix\url{https://doi.org/10.1007/s00440-006-0050-1}

\bibitem{ledoux1991probability}
M.~Ledoux, M.~Talagrand,
  \href{https://books.google.com/books?id=cyKYDfvxRjsC}{Probability in Banach
  Spaces: Isoperimetry and Processes}, A Series of Modern Surveys in
  Mathematics Series, Springer, 1991.
\newline\urlprefix\url{https://books.google.com/books?id=cyKYDfvxRjsC}

\bibitem{hall2014martingale}
P.~Hall, C.~Heyde, Z.~Birnbaum, E.~Lukacs,
  \href{https://books.google.com/books?id=gqriBQAAQBAJ}{Martingale Limit Theory
  and Its Application}, Communication and Behavior, Elsevier Science, 2014.
\newline\urlprefix\url{https://books.google.com/books?id=gqriBQAAQBAJ}

\end{thebibliography}






\end{document}